\newcommand{\lb}{\label}
\theoremstyle{plain}
\newtheorem{definition}{Definition}[section]
\newtheorem{thm}{Theorem}[section]
\newtheorem{theorem}{Theorem}[section]
\newtheorem{mainthm}{Theorem}
\newtheorem{lemma}[thm]{Lemma}
\newtheorem{corollary}[thm]{Corollary}
\newtheorem{proposition}[thm]{Proposition}
\theoremstyle{definition}
\newtheorem{example}[thm]{Example}
\theoremstyle{remark}                  
\newtheorem{remark}[thm]{Remark}
\DeclareMathOperator*{\osc}{osc}
\definecolor{darkgreen}{rgb}{0,0.4,0}
\newcommand{\beq}{\begin{equation}}
\newcommand{\eeq}{\end{equation}}
\newcommand{\bbN}{{\mathbb{N}}}
\newcommand{\bbR}{{\mathbb{R}}}
\newcommand{\bbZ}{{\mathbb{Z}}}
\newcommand{\calM}{{\mathcal M}}
\newcommand{\eps}{\varepsilon}
\numberwithin{equation}{section}
\def\XXint#1#2#3{{\setbox0=\hbox{$#1{#2#3}{\int}$ }
\vcenter{\hbox{$#2#3$ }}\kern-.6\wd0}}
\title[Homogenization Problems on Space-Time Domains]{On Homogenization Problems with Oscillating Dirichlet Conditions in Space-Time Domains}
\author[Y Zhang]{\bfseries Yuming Paul Zhang}
\address{
Department of Mathematics \\ 
University of California,   San Diego\\ 
9500 Gilman Dr\\
La Jolla, CA 92093}
\email{yuz018@ucsd.edu}
\begin{document}

\vspace{18mm} \setcounter{page}{1} \thispagestyle{empty}

\begin{abstract}
We prove the homogenization of fully nonlinear parabolic equations with periodic oscillating Dirichlet boundary conditions on certain general prescribed space-time domains. It was proved in  \cite{wf,Kimcont} that for elliptic equations, the homogenized boundary data exists at boundary points with irrational normal directions, and it is generically discontinuous elsewhere. However for parabolic problems, on a flat moving part of the boundary, we prove the existence of continuous homogenized boundary data $\bar{g}$. 
We also show that, unlike the elliptic case, $\bar{g}$ can be discontinuous even if the operator is rotation/reflection invariant.
\end{abstract}

\maketitle

\vspace{.1cm}
\noindent{\small {\bf Keywords:} periodic homogenization; space-time domains; fully nonlinear parabolic equations; boundary layers.}

\vspace{.1cm}
\noindent{\small  {\bf 2010 Mathematics Subject Classification}:  35K61, 35B27, 35D40.}

\bigskip

\section{Introduction}
We investigate the homogenization problem of fully nonlinear parabolic equations in general space-time domains. Fix $T>0$ and $d\in \bbN^+$, let $\Omega_T$ be an open bounded subset of $\mathbb{R}^d\times (0,T)$ of the following form
\[
 \Omega_T:=\bigcup_{0<t<T}\left(\Omega(t)\times\{t\}\right)\]
for some $\Omega(t)\subset \mathbb{R}^d$.
Write $\partial_l\Omega_T:=\bigcup_{0<t<T}
\left(\partial\Omega(t)\times\{t\}\right)$ as the lateral boundary of $\Omega_T$. We consider the following problem: 
\begin{equation}\label{eqns1} 
\left\{\begin{aligned}
&\frac{\partial}{\partial t}u^\eps(x,t)-F(D^2u^\eps,x,\frac{x}{\eps},t,\frac{t}{\eps^2})=0 &\text{ in }&\quad \Omega_T,\\
&u^\eps(x,t)=g(x,\frac{x}{\eps},t,\frac{t}{\eps^2}) &\text{ on }&\quad\partial_p\Omega_T:=\partial_l\Omega_T\cup {( \Omega(0)\times\{0\})},
\end{aligned}
\right.
\end{equation}
where $F(M,x,y,t,s)$ is uniformly elliptic in $M$, and both $F$ and $g(x,y,t,s)$ are $\mathbb{Z}^d/\mathbb{Z}$-periodic in $y/s$ variables. Section \ref{assumption} will give the precise assumptions {we make} on the operator, boundary data and the domain. When $\eps>0$ is small, both the operator and the boundary data involve large oscillations. The goal in this paper is to understand the averaging behaviour of solutions $u^\eps(x,t)$ as $\eps\rightarrow 0$, especially when $(x,t)$ is close to the boundary.
If $u^\eps$ converges, we say the problem \eqref{eqns1} homogenizes.

\smallskip


Homogenization problems have a long history and we will only mention a small portion of works, particularly nonlinear periodic problems, that are closely related to our work. We refer readers to \cite{20} and the book \cite{bensoussan2011asymptotic} for the extensive bibliography. For problems with non-oscillating boundary data,
Garc\'ia-Azorero et al. \cite{garcia2003homogenization} proved homogenization for a class of quasilinear parabolic problem in divergence form. Later the fully nonlinear, non-divergence form problem was studied by Marchi \cite{inthomo}.
For general uniformly elliptic {equations} but with oscillating Dirichlet boundary data, Barles and Mironescu \cite{4} worked on half-planes with boundaries passing through the origin. In their paper, the homogenized boundary data arises as a boundary layer limit of a problem set in half {spaces}. 

In general domains, the homogenized boundary data cannot be identified at boundary points of rational normal (the normal vector lies in $\mathbb{R}\mathbb{Z}^d$).
{Despite possible discontinuities at rational directions, Feldman \cite{wf} proved that homogenization happens when there is no flat portions on the boundary of the domain (i.e. the set of boundary points of rational normal has a small Hausdorff dimension). Feldman and Kim showed in \cite{Kimcont} that the homogenized boundary data is H\"{o}lder continuous if the homogenized operator is either rotation/reflection invariant or linear.} More recently, continuity of the homogenized boundary data for linear elliptic system of divergence form is proved in \cite{shen2017regularity,feldman2018continuity}.


\smallskip

In this work, under some {conditions} on the domain, we are going to show that the solutions $u^\eps$ of \eqref{eqns1} converge locally uniformly to $\bar{u}$ which solves a parabolic equation:
\begin{equation}\label{homosolution}
\left\{\begin{aligned}&
\frac{\partial}{\partial t}\bar{u}(x,t)-\bar{F}(D^2 \bar{u},x,t)=0 &\text{ in }&\quad\Omega_T,\\
&\bar{u}(x,t)=\bar{g}(x,t) &\text{ on }&\quad\partial_p\Omega_T.
\end{aligned}
\right.
\end{equation}
Here $\bar{F}$ is the homogenized operator from \cite{intho,inthomo} which will be recalled in Section 2.2.2. And $\bar{g}$ is called the homogenized boundary data which is supposed to be the limit, if exists, of $u^\eps(x,t)$ as $(x,t)$ approaches the boundary. 

To study the limit of $u^\eps$ near the boundary, we need to introduce the \textit{boundary layer problem} which can also be called the \textit{cell problem}, also see \cite{4,wf,feldman2018continuity}. Let us take a lateral boundary point $(x_0,t_0)\in\partial_l\Omega_T$ and denote its interior spatial normal as $\nu_0=\nu_{x_0,t_0}$. As done in the literatures, the analysis proceeds by blowing up a sequence of functions 
\[
{v^\eps(x,t):=u^\eps(x_0+\eps x,t_0+\eps^2 t).}
\]
By passing to a subsequence of $\eps\to 0$, suppose $\frac{x_0}{\eps}\to z$ in $\mathbb{R}^d/\mathbb{Z}^d$ and $\frac{t_0}{\eps}\to \tau$ in $\mathbb{R}/\mathbb{Z}$. Then formally taking $\eps\to 0$
 in \eqref{eqns1} gives rise to the following boundary layer problem:
\begin{equation*}
{\text{(I)}}\quad
\left\{\begin{aligned}
&\frac{\partial v^{z,\tau}}{\partial t}-{F}(D^2 v^{z,\tau},x_0,{x}+z,t_0,t+\tau)=0 &\text{ in }&\quad P_{\nu_0}:=\{(x,t)\in\mathbb{R}^{d+1}\,|\,x\cdot\nu_0\geq 0\},\\
&v^{z,\tau}(x,t)=g(x_0,x+z,t_0,t+\tau) &\text{ on }&\quad\partial P_{\nu_0}.
\end{aligned}
\right.
\end{equation*}
The solution to (I) has a \textit{boundary layer limit}:
\begin{equation}
    \label{bd layer limit}
    \varphi^{z,\tau}_{\nu_0}:=\lim_{R\to\infty}v^{z,\tau}(x+R\nu_0,t),
\end{equation}
and the limit can be shown to be independent of $\tau,x,t$. It is typically not independent of the translation $z$.
When $\nu_0$ is irrational ($\nu_0\in S^{d-1}\backslash\mathbb{R}\mathbb{Z}^d$), similarly as in the elliptic problem \cite{wf}, $\varphi^{z,\tau}_{\nu_0}$ is also independent of $z$. {For these $(x_0,t_0)$}, we can identify
$\bar{g}(x_0,t_0)$ to be the boundary layer limit $\varphi^{z,\tau}_{\nu_0}$. In fact it is not hard to show
\[\lim_{R\to\infty}\limsup_{\eps\to 0}|u^{\eps}(x_\eps+\eps R\nu_0,t_\eps)-\varphi_{\nu_0}^{z,\tau}|=0\]
for all $\Omega_T\ni(x_\eps,t_\eps)\to(x_0,t_0)$ as $\eps\to 0$.
However when the normal direction $\nu_0$ is rational, there is a serious problem that the limit \eqref{bd layer limit} depends on $z$. Actually it was proved in \cite{Kimcont} in the elliptic setting that, generically, the limit of $u^{\eps}$ cannot be continuous on the boundary. To resolve this problem, as mentioned before, \cite{wf,Kimcont} assumed that boundary points with rational direction is a small set, and so they need the dimension $d\geq 2$. 

{
The novelty of this paper is that we provide another solution to the above problem (and we allow $d=1$). We observe that for the parabolic equation, when the lateral boundary is moving along time, boundary homogenization occurs even on spatially flat part with rational normal. }
In order to see the ideas behind, let us go back to the boundary layer problem (I) with rational $\nu_0$ and explain the reason why the boundary layer limit depends on $z$.
For $x\in \partial P_{\nu_0}$ and $z_1,z_2$ such that $z_1\cdot \nu_0\neq z_2\cdot \nu_0$ in $\mathbb{R}^d/\mathbb{Z}^d$, then $\partial P_{\nu_0}+z_1$ and $\partial P_{\nu_0}+z_2$ are different hyperplanes (even after shifting in directions that are perpendicular to $\nu_0$), and the values of $g(x_0,\cdot,t_0,t+\tau)$ on them can be very different. Therefore the corresponding boundary layer limit depends on $z$. However for space-time domain, if the boundary is moving, {we expect that the solution will see all values of $g(x_0,\cdot,t_0,\cdot)$ in a larger scale.} Using this approach, we can also show homogenization for $d=1$. 


\smallskip

\begin{figure}\label{fig double}\caption{Double-scale boundary layer limits}\centering\includegraphics[scale=0.5]{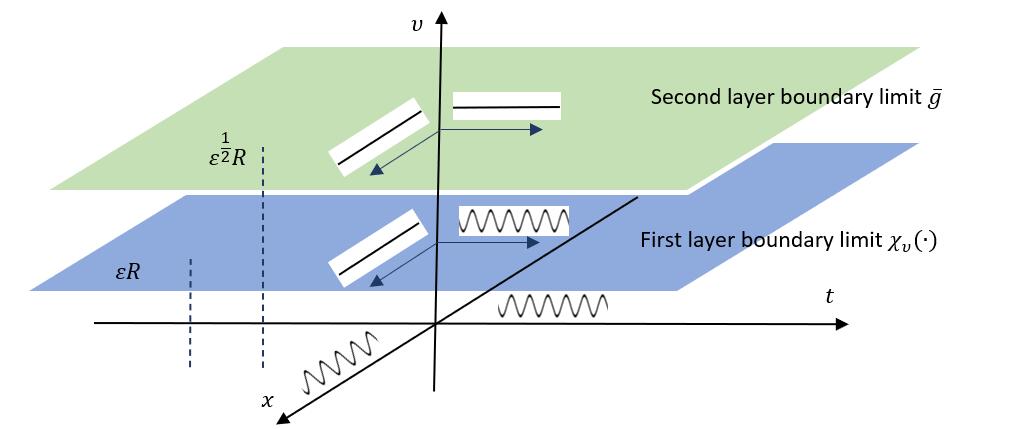}\end{figure}

Now we proceed to describe more precisely the mechanism of the homogenization at flat moving boundary with a rational direction $\nu$.
Fix one such boundary point and for simplicity we assume it is $(x_0,0)$ (there is no essential difficulties of considering $(x_0,t_0)$). Then write $c_0\neq 0$ as the boundary speed (in inner normal direction) at $(x_0,0)$. In order to see how the moving boundary could help, consider the following transformation
\begin{equation*}
\mu^{\eps}(x,t){:=} u^\eps(x_\eps+\eps x+c_0(t_\eps+\eps^2 t)\,\nu,t_\eps+\eps^2  t)
\end{equation*}
where $x_\eps,t_\eps$ are going to be specified.
By passing $\eps\rightarrow 0$ along subsequences, we assume that 
\beq\lb{1.2}
\frac{x_\eps+c_0(t_\eps+\eps^2 t)\nu}{\eps}\to
z_*\quad\text{ modulus }\mathbb{Z}^d.
\eeq
Then, similarly as done in \eqref{bd layer limit}, we obtain a boundary layer limit $
\chi_{\nu}^{z_*}$ from a boundary layer problem (R1) (see section 3.2) (since $t_0=0$, there is no $\tau$ dependence here). But now $\chi_{\nu}^{z_*}$ depends on $z_*$, and we can show that it only depends on $z_*\cdot\nu$. Then this $\chi_{\nu}^{z_*}$ provides the value of $u^\eps$ at $(x_\eps+R\eps\nu+c_0t_\eps\nu,t_\eps)$ for large $R>1$ with a small error depending on $R^{-1},\eps$ (along the subsequence of $\eps\to 0$).

Now we pull ourselves further away from the boundary on which $(x_0,0)$ resides to a distance of $R\eps^{\frac{1}{2}}$ (from a distance of $R\eps$) by sending
\[
x_\eps:=x_0+\eps^\frac{1}{2}y,\quad t_\eps:=\eps s.
\]
Then assuming $\frac{x_0}{\eps}\to z$ in $\bbR^d/\bbZ^d$ along subsequence of $\eps\to 0$, \eqref{1.2} yields $z_*\cdot\nu=z\cdot\nu+c_0s$ for all $y\in \{y'\,|\,y'\cdot\nu=0\}$.
Next define
\[
{w^\eps(y,s):= u^\eps(x_0+\eps^\frac{1}{2} y+c_0\eps s\,\nu,\eps s).}
\]
which gives rise to (after ignoring higher order terms)
\[\frac{\partial}{\partial t}w^\eps(y,s)- {F}(D^2_yw^\eps,x_0,\eps^{-\frac{1}{2}}y+z+c_0s\nu,t_0,{\eps^{-1}}s )\approx 0  \quad \text{ in } \quad P_{\nu}.\]
As for the boundary information, we will use $\chi_\nu(z\cdot\nu+c_0s):=\chi_\nu^{z\cdot\nu+c_0s}$ which is approximately the value of $u^\eps$ near the boundary but of distance of the small scale $R\eps$ in space.
This equation is a homogenization problem with non-oscillating boundary data. The classical theorem suggests that its limit is
\begin{equation*}
\text{(R2)}\quad\left\{
\begin{aligned}&
{\partial_t}w^z(y,s)- \bar{F}^0(D^2w )=0 &  \text{ in }&\quad P_{\nu},\\
&w^z_2(y,s)=\chi_{\nu}(z\cdot\nu+c_0s) &  \text{ on }&\quad \partial P_{\nu}
\end{aligned}
\right.
\end{equation*}
where $\bar{F}^0(M)$ is the homogenized operator of $F(M,x_0,\cdot,t_0,\cdot)$. This is our second boundary layer problem. Though the boundary data in (R2) depends on $z$, the boundary layer limit is independent of $z$ (the choice of subsequences of $\eps\rightarrow 0$) due to the fact that different $z$'s only cause {some shifts} in the time direction.
From the above informal argument, we expect that at a larger scale ($\eps^\frac{1}{2}$ in space and $\eps$ in time), the solutions converge at the boundary:
\[
\lim_{\eps\to 0}u^{\eps}(x_0+\eps^{\frac{1}{2}} R\nu,0)=\lim_{\eps\to 0}w^{\eps}(R\nu,0)=w^z(R\nu,0)
\]
Thus we define the boundary layer limit of $w^z$ as the homogenized boundary data $\bar{g}$ at $(x_0,0)$. 

At the end of the story, we will show boundary homogenization in the following subsets of the lateral boundary:
\begin{equation}\label{bdry clss}
    \begin{aligned}
 \Gamma_1(T):=&\left\{ (x,t)\,|\,  {t\in (0,T),\,x\in \partial\Omega(t) }, \, \nu_{x,t} \text{ is irrational}\right\},\\
 \Gamma_2(T):=&\left\{ (x,t)\,|\, t\in (0,T),\,x\in \partial\Omega(t), \, \nu_{y,s}=\nu \text{ is rational and constant},\right.
\\
&\quad  {\text{  for all $(y,s)\in \partial_l\Omega$ that are sufficiently close to }}(x,t) \}.\end{aligned}
\end{equation}
Later we drop $(T)$ from $\Gamma_1(T)$ and $\Gamma_2(T)$ for simplicity.
Let us remark that when $d=1$, then the unit normal $\nu_{x,t}$ is always to a rational direction, and therefore $\Gamma_1=\emptyset$. 

Next we discuss the continuity property of $\bar{g}$. It is not hard to show that $\bar{g}$ is continuous on $\Gamma_T:=\Gamma_1\cup \Gamma_2\in \partial_l\Omega_T$. However we will show by examples that $\bar{g}$ cannot be continuous extend to $\overline{\Gamma}_T$ (where the closure is taken in $\partial_l\Omega_T$). Let us mention that in \cite{Kimcont, kim} it was proved for the elliptic problems, if the operators homogenize to a rotation/reflection invariant or a linear operator, then the homogenized boundary data is continuous on the closure of the set where it is well-defined. Nevertheless, in our setting, if $\bar{F}$ is rotation/reflection invariant, $\bar{g}$ can be continuously defined on $\overline{\Gamma}_1\backslash \overline{\Gamma}_2$ (but still not $\overline{\Gamma}_T$). We show the possibility of the discontinuities in Example \ref{prop4.6}. Finally in the case when $\bar{F}$ is linear, $\bar{g}$ can be continuously extended to $\overline{\Gamma}_T$.



After identifying $\bar{g}$ on {$\Gamma_T$} as well as the bottom boundary $\Omega(0)\times\{0\}$, and showing the continuity properties of $\bar{g}$, we are in the position of proving the homogenization result despite a possible lack of continuity in the homogenized boundary data. This will be done by a comparison principle which is used by Feldman \cite{wf} in the proof of homogenization for the elliptic problem.

\smallskip

Now we state our main theorem:

\begin{mainthm}\label{thmA}(Theorem \ref{final})
Let $d\geq 1$, and assume conditions (F1)--(F4)(O) hold (see Section \ref{assumption} for details). Suppose the Hausdorff dimension of $\partial_l\Omega_T\backslash \Gamma_T$ is less than $\frac{d\lambda}{2\Lambda}$ where $\lambda,\Lambda$ are the elliptic constants of the operator. Then \eqref{eqns1} homogenizes in the sense that $u^\eps$ converges locally uniformly to the unique solution of the homogenized problem
\begin{equation}\lb{a11}
\left\{\begin{aligned}&
\frac{\partial}{\partial t}\bar{u}(x,t)-\bar{F}(D^2 \bar{u},x,t)=0 &\text{ in }&\quad\Omega_T,\\
&\bar{u}(x,t)=\bar{g}(x,t) &\text{ on }&\quad\Gamma_T\cup (\Omega(0)\times\{0\}),
\end{aligned}
\right.
\end{equation}
{where $\bar{F}$ and $\bar{g}$ are, respectively, the homogenized operator and the homogenized boundary data.}
\end{mainthm}
We remark that a slightly weaker hypothesis on the size of $\partial_l\Omega_T\backslash \Gamma_T$ is required if we use of the parabolic Hausdorff dimension (see Definition \ref{prblc HD}). However, to the best of our knowledge, the sharp hypothesis that guarantees the well-posedness of \eqref{a11} remains open.

\smallskip

Concerning linear operators, we only need to assume that there is no flat stationary lateral boundary.

\begin{mainthm}(Example \ref{prop4.6}, Theorem \ref{linearversion})
Let $d\geq 1$, and assume conditions (F1)--(F4)(O) hold. Let $\Omega_T$ be a time-dependent domain such that $\partial_l\Omega_T\subset \overline{\Gamma}_T$. Then if $\bar{F}$ is a linear operator (particularly when $F$ is linear), problem \eqref{eqns1} homogenizes with continuous homogenized boundary data $\bar{g}$ on $\partial_p\Omega_T$. If $\bar{F}$ is not linear, $\bar{g}$ can be discontinuous on $\partial_l\Omega_T$.
\end{mainthm}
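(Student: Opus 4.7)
The plan is to combine the linearity of the cell problems with the continuity facts already granted by Propositions \ref{prop4.6} and \ref{prop4.7}, and then exhibit two explicit counter-examples for the nonlinear claim. Throughout I use the hypothesis that $\Omega_T$ has no flat stationary boundary of rational normal, so that $\partial_l\Omega_T\subset\overline{\Gamma_1\cup\Gamma_2}$; the Hausdorff dimension hypothesis of Theorem \ref{thmA} is then automatically satisfied and $\bar g$ is defined on all of $\Gamma_T$.

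For the continuity statement I would first observe that when $F$ is linear, so are $\bar F$ and $\bar F^0$, and hence both cell problems (R1) and (R2) are linear parabolic problems in the half-space $P_\nu$. Consequently $f^z(cs)$ and $\bar g(x_0,t_0)$ admit Poisson-kernel representations against $g(x_0,\cdot,t_0,\cdot)$ and against $f^z$ respectively. Composing the two representations and averaging over the fast periodic parameter $z$ --- a step that is legitimate precisely because of linearity --- yields a single Poisson-type formula for $\bar g$ whose kernel depends continuously on the slow variables and on the normal $\nu$ outside rational directions. This already gives continuity of $\bar g$ within $\Gamma_1$ and within $\Gamma_2$ separately, recorded in the rotation/reflection invariant setting by Propositions \ref{prop4.6} and \ref{prop4.7}.

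The essential new step, contained in Theorem \ref{linearversion}, is the matching at the interface $\overline\Gamma_1\cap\overline\Gamma_2$. I would approach a point $(x_0,t_0)\in\Gamma_2$ along a sequence $(x_n,t_n)\in\Gamma_1$ with irrational normals $\nu_n\to\nu_0$, and show that the cell-problem kernels $\mu^0_{\nu_n}$ converge weakly to the kernel produced by the double-scale construction at $(x_0,t_0)$. The point is that in the linear setting the inner averaging coming from (R1) commutes with the outer one from (R2) and collapses to a single Poisson integral that depends continuously on $\nu$ even at rational directions. Continuity up to $\Omega(0)$ then follows from the H\"older continuity of $g$ in $x$ together with these explicit formulae.

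For the discontinuity statement I would construct, in $d=2$, a space-time domain whose lateral boundary contains a moving flat arc of rational normal meeting a curved arc of irrational normals, and a periodic boundary datum $g(x,y,t,s)$ such that a Pucci-type nonlinear operator (first a general $\mathcal{M}^+_{\lambda,\Lambda}$, then a rotation/reflection invariant variant obtained from a radially symmetric function of the Hessian eigenvalues) selects the extremal boundary layer tail at the rational piece while producing a strictly different generic tail along nearby irrational normals; the comparison principle of Theorem \ref{uniq} applied to explicit barriers then exhibits the jump. The main obstacle throughout is exactly this interface matching: without linearity, the $\max$/$\min$ in $F$ breaks commutation of the two averages at rational directions, and it is precisely this failure that the two counter-examples exploit.
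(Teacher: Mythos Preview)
You have reversed the roles of Propositions \ref{prop4.6} and \ref{prop4.7}: in the paper these are the two \emph{discontinuity} examples (a general nonlinear operator and a rotation/reflection-invariant one, respectively), not continuity results. The continuity of $\bar g$ on $\Gamma_1$ and $\Gamma_2$ separately is Proposition \ref{tcont}, which holds for all $F$ satisfying (F1)--(F4) and uses no linearity.

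For the linear continuity extension your Poisson-kernel strategy is plausible in spirit but differs substantially from the paper and is underspecified at the crucial step. The paper does not represent $\bar g$ by kernels and then prove weak convergence. Instead it uses two concrete ingredients: (i) Proposition \ref{concave}, which says that for a linear translation-invariant operator the boundary-layer tail equals the average of the periodic boundary data over a unit cell; and (ii) Theorem \ref{5.1} (the parabolic version of Feldman--Kim), which gives for linear $\bar G$ a well-defined directional limit $L_\xi(G,h)$ independent of the approach direction $\eta$. The matching at $\overline{\Gamma}_1\cap\overline{\Gamma}_2$ is then the one-line computation of Proposition \ref{homoF}: both $\bar g(G,h,\xi,c)$ from (R2) and $L_\xi(G,h)$ from \eqref{limit} reduce, by (i), to the same average $|\hat\xi|\int_0^{1/|\hat\xi|} f^{0,\xi}(t)\,dt$. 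Your ``averaging over the fast parameter $z$'' is not needed and somewhat misleading: Propositions \ref{irgbar} and \ref{R1R2} already show $\bar g$ is $z$-independent for any $F$; linearity enters only to identify the tail with an explicit average.

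For the discontinuity claim your outline is in the right direction but far less concrete than what the paper does. Proposition \ref{prop4.6} uses the operator $\sup_{(\eta,\nu)\in Nb_\delta}\{\Delta u,\,4u_{\eta\eta}+u_{\nu\nu}\}$ with boundary data $\sin(y/\epsilon)$ on a specific tilting-in-time domain, and compares the Case~2 tail (close to $0$ when $\delta$ is small, by stability near the Laplacian) against the Case~1 tail (bounded below by a positive constant via explicit exponential subsolutions and the convexity bound of Proposition \ref{concave}). Proposition \ref{prop4.7} takes $F(M)=\max\{\operatorname{tr}M,\Lambda\operatorname{tr}M\}$, which is rotation/reflection invariant, and shows directly that $L_\xi=0$ while $\bar g(F,g,\xi,1)>0$. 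A general ``Pucci-type operator selects the extremal tail'' heuristic would not by itself produce the strict inequality; the paper's explicit subsolution pairs are what make the argument work.
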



\subsection{Outline}

In Section 2 we collect most important notation, the assumptions, and some previous results. We will prove one localization lemma and, as a corollary, a comparison principle in $P_\nu$.
The boundary layer problems for the lateral boundary with both irrational and rational normal will be formulated in Section 3. From these problems, we define the homogenized boundary data $\bar{g}$.
In Section 4 we study the continuity properties of $\bar{g}$. In \ref{discont}, we give two examples showing the failure of continuous extension of $\bar{g}$ on $\overline{\Gamma}_1\cap\overline{\Gamma}_2$.
In Section 5, we show the local uniform convergences of $u^\eps$ to {$\bar{g}$ on $\Gamma_T$}. Section 6 discusses the homogenization on the bottom boundary. In Section 7, we prove the well-posedness of the homogenized problem given discontinuous boundary data, and finish the proofs of Theorem A and Theorem B.

\subsection{Acknowledgements}

The author would like to thank his advisor Inwon Kim for suggesting the problem, as well as for the stimulating guidance and discussions. The author would also like to thank William M. Feldman and Olga Turanova for helpful discussions.

\section{Notations, Assumptions and Preliminaries}\label{assumption}

{Let $\mathcal{M}$ be the space of $d\times d$ symmetric matrices equipped with the spectral norm.}

By rational vector or a vector to a rational direction we mean $\nu\in\mathbb{R}\mathbb{Z}^d\cap \mathbb{S}^{d-1}$. And we call a unit vector irrational if it is not a rational vector. In this paper, $\nu$ will always be unit spatial vectors.

We will consider a bounded time interval: $[0,T]$ for some $T>0$, and by passing $T\to +\infty$ the results hold for all time. Let us call $\Omega(0)\times\{0\}$ the bottom boundary of $\Omega_T$ and $\partial_l \Omega_T$ the lateral boundary. The union of the bottom and lateral boundary is denoted by $\partial_p \Omega_T$. 

By universal constants, we mean constants which only depend on $d$, $\lambda$, $\Lambda$, bounds on $F,g$ given in condition (F1) and the $\mathcal{C}^1$ norm of $\partial_l\Omega_T$.


By parabolic cylinders of radius $r$ we mean \begin{equation}\label{prblc}
S_r(x,t):=\left\{(y,s)\,|\,|y-x|< r, |s-t|< r^2\right\}.
\end{equation} 
We may omit $x,t$ if they are $0$.


Recall the definition of \textit{half-relaxed limits} of a sequence of functions {$u^\eps(x,t):\Omega\to\bbR$}:
\[
\begin{aligned}
u^*(x,t):=\limsup_{\substack{ \Omega\ni(x',t')\rightarrow (x,t),\\\eps\rightarrow 0}}u^\eps(x',t'),\\ u_*(x,t):=\liminf_{\substack{ \Omega\ni(x',t')\rightarrow (x,t),\\\eps\rightarrow 0}}u^\eps(x',t').
\end{aligned}
\]
We write them as $\limsup^* u^\eps(x,t), \liminf_{*} u^\eps(x,t)$ respectively for abbreviation of notation.

The oscillation operator $\osc$ is defined to be:
for any $U\subseteq \mathbb{R}^{d+1}$ and a continuous function $u:U\to\mathbb{R}$,
\[ \osc_{(U)}(u):=\sup \left\{|u(x_2,t_2)-u(x_1,t_1)|\,|\, (x_i,t_i)\in U,\,i=1,2\right\}.\]

\subsection{The assumptions}
Suppose $F$ is a function on $\mathcal{M}\times\mathbb{R}^{2d+2}$ and $g$ is a function on $\mathbb{R}^{2d+2}$. For any $(M_i,x_i,y_i,t,s),(M,x,y,t,s)\in \mathcal{M}\times \mathbb{R}^{2d+2}, i=1,2$, we assume
\begin{itemize}
    \item[(F1) ] $F(M,x,y,t,s)$ is locally Lipschitz continuous in all its parameters and there is a function $\rho\in C[0,\infty)$ with $\rho(0^+)=0$ such that 
\begin{align*}
    |F(M_1,x_1,y_1,t,s)-F(M_2,x_2,y_2,t,s)|&\leq\\ \rho(|M_1-M_2|+(|x_1-x_2|&+|y_1-y_2|)(1+|M_1|+|M_2|)).
\end{align*}
The boundary data $g(x,y,t,s)$ is $\alpha$-H\"{o}lder continuous in all its parameters.

\item[(F2) ] There exist $\Lambda>\lambda>0$ such that for any $ \mathcal{M}\ni N\geq 0$,
\[\lambda {\text{Tr}}\,(N)\leq F(M+N,x,y,t,s)-F(M,x,y,t,s)\leq \Lambda {\text{Tr}}\,(N).\]

\item[(F3) ] $F(M,x,y,t,s), g(x,y,t,s)$ are $\mathbb{Z}^d/\mathbb{Z}$ periodic in $y/s$. 

\item[(F4) ] {For any $(x,t)$, $\eps F(\eps^{-1}M,x,y,t,s)\to F^{x,t}(M,y,s)$ locally uniformly as $\eps\to 0$.
For any $(M,x,t)$, there exists a neighbourhood of $(M,x,t)$ such that for any $(M',x',t')$ inside the neighbourhood, there is a constant $C(M,x,t)$ such that}
\[|F(M'+N,x',y,t',s)- F^{x',t'}(N,y,s)|\leq C \quad \text{ for all } (N, y,s)\in\mathcal{M}\times\mathbb{R}^{d+1}.\]

\end{itemize}

The uniform elliptic condition (F2) induces condition (A1) in \cite{inthomo} and from which we have comparison principle. Condition (F4) will be used for homogenization near the boundary. The second part of (F4) is the same as (A3) in \cite{inthomo}. 
The assumptions made in \cite{4, intho, LW} hold if assuming (F1)--(F4), and so we can apply their results directly.
Operators of the following forms are covered
\[F(M,x,y,t,s)=f(x,y,t,s)+\inf_\beta \sup_\alpha \text{Tr}\,(A_{\alpha\beta}(x,y,t,s)M)  \]
(where $A_{\alpha\beta}(x,y,t,s)\in \calM$ is positive definite, and satisfies some continuity and periodicity conditions).
While our method does not fit in the case when the operator is of divergence form.

\smallskip

For the domain, we assume
\begin{itemize}
    \item[(O) ]  $\Omega(t)$ is open, bounded and connected for every $0\leq t\leq T$. 
The lateral boundary $\partial_l\Omega_{T}$ is $\mathcal{C}^1$ in both space and time.
\end{itemize}

\smallskip

Next let $G(M,y,s)$ be a function on $\mathcal{M}\times\mathbb{R}^{d+1}$ and $h(y,s)$ a function on $\mathbb{R}^{d+1}$. We say $G,h$ satisfies condition {(G)} if 
\begin{itemize}
    \item[(G1) ] $G(M,y,s)$ is locally Lipschitz continuous in all its parameters and there is a function $\rho\in C[0,\infty)$ with $\rho(0^+)=0$ such that 
\[|G(M_1,y_1,s)-G(M_2,y_2,s)|\leq \rho\left(|M_1-M_2|+(|y_1-y_2|)(1+|M_1|+|M_2|)\right).\]
$h(y,s)$ is $\alpha$-H\"{o}lder continuous in $y,s$.

\item[(G2) ] There exist $\Lambda>\lambda>0$ such that for any $\mathcal{M}\ni N\geq 0$,
\[\lambda {\text{Tr}}\,(N)\leq G(M+N,y,s)-G(M,y,s)\leq \Lambda {\text{Tr}}\,(N).\]

\item[(G3) ] $G(M,y,s), h(y,s)$ are $\mathbb{Z}^d/\mathbb{Z}$ periodic in $y/s$. 

\item[(G4) ] $G$ is homogeneous in $M$ in the sense that for all $c\in\mathbb{R},(M,y,s)$ in the domain, \[G(cM,y,s)=cG(M,y,s).\]

\end{itemize}

Here (G1)--(G3) is the same as (F1)--(F3) with $x,t$ removed.
Notice that if $F,g$ satisfy (F1)--(F4), $G:=F^{x,t},h:=g(x,\cdot,t,\cdot)$ satisfy (G). 

\smallskip

In view of the results in \cite{4} and Lemma \ref{lemmalpos} below, our method applies to the following operators
\[
F_\eps(D^2u,x,y,t,s)+G_\eps(Du,x,y,t,s)
\] 
with some suitable conditions, including $\eps^{2}F_\eps(\eps^{-2}M,x,y,t,s)\rightarrow F^{x,t}(M,y,s),\eps G_\eps(\eps^{-1}p,x,y,t,s)\rightarrow G^{x,t}(p,y,s)$
 locally uniformly as $\eps\rightarrow 0$, and
{$G_\eps(p,x,y,t,s)$ is uniformly Lipschitz continuous in $p$.}
If we assume the well-posedness and the interior homogenization of the general equations, then our homogenization results concerning oscillating boundary data in space-time domain hold as well. 



\subsection{Previous results}
We use the notion of viscosity solutions throughout this paper which were originally introduced in \cite{15}. We also refer readers to \cite{vis, 9} concerning viscosity solutions to elliptic/parabolic equations with Dirichlet boundary conditions. 

Consider the problem:
\begin{equation}\label{def vis}
\left\{
\begin{aligned}
  &  {\partial_t}{u}(x,t)-{F}(D^2 {u},x,t)=0 &\text{ in }&\quad\Omega_T,\\
&{u}(x,t)={g}(x,t) &\text{ on }&\quad\partial_p\Omega_T
\end{aligned}    \right.
\end{equation}
with $F,g$ satisfying (G1)(G2).

\begin{definition} (Viscosity solution)
\begin{enumerate}
\item[(i) ]
We say an upper semi-continuous function $u:\Omega_T\to \mathbb{R}$ is a subsolution to \eqref{def vis} if the following holds:
for any $(x,t)\in \partial_p\Omega_T$, $\limsup_{(y,s)\to (x,t)} u(y,s)\leq g(x,t)$;
for any smooth function $\phi$ such that $u-\phi$ has a local max at $(x_0,t_0)\in \Omega_T$, then ${\partial_t}{\phi}(x_0,t_0)-{F}(D^2 \phi,x_0,t_0)\leq 0$.

\item[(ii) ]
We say a lower semi-continuous function $u:\Omega_T\to \mathbb{R}$ is a subsolution to \eqref{def vis} if the following holds:
for any $(x,t)\in \partial_p\Omega_T$, $\liminf_{(y,s)\to (x,t)} u(y,s)\geq g(x,t)$;
for any smooth function $\phi$ such that $u-\phi$ has a local min at $(x_0,t_0)\in \Omega_T$, then ${\partial_t}{\phi}(x_0,t_0)-{F}(D^2 \phi,x_0,t_0)\geq 0$.

\item[(iii) ]
We say a continuous function $u:\Omega_T\to \mathbb{R}$ is a solution to \eqref{def vis} if it is both a subsolution and a supersolution.

\end{enumerate}
\end{definition}

It is known that viscosity solutions are stable under uniform convergence, see \cite{15,wf}.

\begin{lemma}
\label{lem stability}
Let $\{F_i,g_i\}$ be a sequence of operators and boundary data satisfying (G1)(G2). Suppose $(F_i,g_i)$ converge locally uniformly to $(F,g)$. Let $u_i$ be a sequence of bounded subsolutions (supersolutions) to \eqref{def vis} with $F,g$ replaced by $F_i,g_i$. Let $u^*$ be the upper half-relaxed limit of $u_i$, then $u^*$ is one subsolution to \eqref{def vis} (resp. the lower half-relaxed limit $u_*$ is one supersolution).
\end{lemma}

Comparison principle of fully nonlinear parabolic equations on a space-time domain with general Neumann type boundary condition is proved by Lundstr\"{o}m and \"{O}nskog \cite{exist} which generalizes the results on fixed domains by Dupuis and Ishii \cite{9}. It is by now classical that well-posedness for equation \eqref{eqns1} follows from Perron's method and comparison principle.

\subsubsection{Comparison, boundedness, and regularity results}

To state the comparison principle, we need to introduce the well-known \textit{Pucci's extremal operators}. The Pucci's operators with parameters $\Lambda>\lambda>0$ are defined as $\mathcal{P}^\pm(\lambda,\Lambda): \mathcal{M}\rightarrow \mathbb{R}$  such that
\[\mathcal{P}^+(\lambda,\Lambda)(M)=\Lambda \,{\text{Tr}}\, M_+-\lambda \,{\text{Tr}}\, M_-,\]
\[\mathcal{P}^-(\lambda,\Lambda)(M)=\lambda \,{\text{Tr}}\, M_+-\Lambda \,{\text{Tr}}\, M_-\]
where $M_\pm\geq 0$ are respectively the positive and negative part of $M$. We refer readers to \cite{elliptic} for more discussions. 

Condition (G2) implies that if $u$ and $v$ satisfy, in the viscosity sense, 
\begin{align*}
    & {\partial_t}{u}(x,t)-{F}(D^2 {u},x,t)\leq 0 ,\\
    & {\partial_t}{v}(x,t)-{F}(D^2 {v},x,t)\geq 0 
\end{align*}
in $\Omega_T$, then $w:=u-v$ satisfies
\[
    {\partial_t}{w}(x,t)-\mathcal{P}^+(D^2 {w})\leq 0
\]
in the viscosity sense.
We have the following comparison principle.

\begin{lemma}(Theorem 3.14 \cite{LW})
Let $u(x,t),v(x,t)$ be as the above. Then 
\[\sup_{\Omega_T}\,(u-v)_+=\sup_{\partial_p\Omega_T}(u-v)_+.\]
\end{lemma}

Due to condition (G2), 
\[\mathcal{P}^-(M)-|F(0,x,t)|\leq F(M,x,t)\leq \mathcal{P}^+(M)+|F(0,x,t)|.\] 
If for some bounded function $h(x,t)$, a continuous function $u$ satisfies
\begin{equation*}
\frac{\partial}{\partial t}u-\mathcal{P}^-(D^2 u)+C_1|Du|-h\geq 0,
\end{equation*}
\begin{equation*}
\frac{\partial}{\partial t}v-\mathcal{P}^+(D^2 v)-C_1|Dv|-h\leq 0
\end{equation*} 
in the viscosity sense in some domain, then we say $u\in S(\lambda,\Lambda,h)$. 

We have the following result showing the boundedness of solutions.

\begin{proposition}(Corollary 3.20 \cite{LW})
{Let $u\in S(\lambda,\Lambda,h)$ in $\Omega_T$. Then there exits $C$ such that}
\[\|u\|_{L^\infty(\Omega_T)}\leq \|u\|_{L^\infty(\partial_p\Omega_T)}+C\|h\|_{L^{d+1}(\Omega_T)}.\]
In particular, solutions to \eqref{def vis} are bounded.
\end{proposition}

\smallskip

Regularities of viscosity solutions to fully nonlinear parabolic equations are studied by Wang \cite{LW, LW2} and Imbert et al. \cite{PARA}. The following two theorems provide the interior regularity and the boundary regularity of solutions respectively. 

\begin{theorem}\label{LW0}
(Theorem 4.19 \cite{LW}) Let $u\in S(\lambda,\Lambda,h)$ in a parabolic cylinder $S_1$. There exist $\beta\in(0,1),C\geq 1$ such that for all $r<1$,
\[\osc_{(S_r)} (u)\leq C(r^\beta {\osc_{(S_1)}(u)}+r\|h\|_{d+1,S_1})\]
and
\[\|u\|_{C^\beta(S_\frac{1}{2})}\leq C(\|u\|_{\infty,S_1}+\|h\|_{d+1,S_1}).\]

\end{theorem}

\begin{corollary}\label{cor LW}
Let $R\geq N\geq 1$. Suppose a bounded function $u$ solves \eqref{def vis} in $S_R$ and $F,g$ satisfy (G). Then
\[\osc_{(S_N)}(u)\leq C\left({N}/{R}\right)^\beta.\]
\end{corollary}
\begin{proof}
Let $v(x,t)=u(Rx,R^2t)$ which then solves $ {\partial_t}{v}(x,t)-{F}(D^2 {v},\frac{x}{R},\frac{t}{R^2})=0$ in $S_1$. Here we used the homogeneity condition (G4). Also by (G4), $v\in S(\lambda,\Lambda,0)$ and thus Theorem \ref{LW0} concludes the proof.
\end{proof}

\begin{theorem}\label{cont}
(Theorem 2.5 \cite{LW2}) Let $u\in S(\lambda,\Lambda,h)$. If $u|_{\partial_p\Omega}$ is H\"{o}lder continuous at $(x_0,t_0)\in \partial_p\Omega$ and $\partial_p\Omega$ is Lipschitz at $(x_0,t_0)$, then $u$ is $C^\beta$ at $(x_0,t_0)$ for some $\beta$.
\end{theorem}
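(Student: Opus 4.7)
The plan is to prove Hölder decay of $u$ at the boundary point by a barrier--iteration argument, and then combine this pointwise decay with Theorem \ref{LW} to obtain a genuine $C^\beta$ bound. After translating so that $(x_0,t_0)=(0,0)$ and subtracting $u(0,0)$ (which modifies $h$ only by a bounded drift term that can be absorbed into the $L^{d+1}$ norm), I reduce to $u(0,0)=0$ with $|u(x,t)|\le M(|x|+|t|^{1/2})^\alpha$ on $\partial_l\Omega_T\cap S_1$. The goal becomes a one-sided oscillation decay $\operatorname{Osc}_{S_r}(u)\le Cr^\beta$ for some $\beta\in(0,\alpha]$ and all small $r$; once this holds, applying Theorem \ref{LW} in the largest interior parabolic sub-cylinder of radius $\sim\operatorname{dist}_p((x,t),\partial_p\Omega_T)$ for each nearby $(x,t)$ and interpolating with the pointwise boundary decay upgrades it to a full $C^\beta$ estimate at $(0,0)$.

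The Lipschitz assumption on $\partial\Omega$ at the base point supplies the key exterior condition: there exist $r_0,\kappa>0$ and a spatial unit vector $e$ such that for every $r\le r_0$ the parabolic cylinder $S_r$ contains an exterior parabolic cone $E_r\subset S_r\setminus\Omega_T$ with $|E_r|\ge\kappa|S_r|$. With this I would construct, at each scale $r$, an explicit Pucci supersolution $\Phi_r$ of the form
\[
\Phi_r(x,t)=A\bigl[(|x-x^*|^2+\mu(t^*-t))^{-p}-R^{-2p}\bigr]_{+},
\]
centred at a point $(x^*,t^*)\in E_r$. Choosing $p=p(d,\lambda,\Lambda,C_1)$ large and $\mu$ small forces $\partial_t\Phi_r-\mathcal P^{+}(D^2\Phi_r)-C_1|D\Phi_r|\ge 0$ outside a tiny neighbourhood of $(x^*,t^*)$, while tuning the amplitude $A$ makes $\Phi_r$ dominate $u$ on the parabolic boundary of $S_r\cap\Omega_T$; the boundary Hölder bound contributes a term $\le CMr^\alpha$ and the right-hand side contributes $\le C\|h\|_{d+1,S_r}$ via the parabolic ABP estimate. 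Comparison with $\pm\Phi_r$ then yields, for a fixed $\theta\in(0,1)$,
\[
\operatorname{Osc}_{S_{\theta r}}(u)\le\theta\,\operatorname{Osc}_{S_r}(u)+CMr^\alpha+C\|h\|_{d+1,S_r},
\]
and a standard geometric-sum iteration across radii $\theta^k r_0$ produces exactly the pointwise Hölder decay.

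The main obstacle is the barrier construction itself, adapted simultaneously to parabolic scaling and to a merely Lipschitz spatial boundary. Since the Lipschitz assumption only yields an exterior cone of positive measure rather than an exterior ball, a purely spatial radial barrier is too rigid; the cone $E_r$ has to be ``seen'' by the barrier through the parabolic distance $|x-x^*|^2+\mu(t^*-t)$, and the exponent $p$ must be large enough to beat both $\mathcal P^{+}(D^2\cdot)$ and the first-order term $C_1|D\cdot|$ appearing in the definition of $S(\lambda,\Lambda,h)$, which is precisely what pins down the admissible $\beta$. If an ad hoc supersolution proves awkward, one can instead derive the one-step decay from the parabolic Krylov--Safonov $L^\varepsilon$ estimate applied on $E_r$: the positive measure of $E_r\cap\{u\le\eta\operatorname{Osc}_{S_r}(u)\}$ there forces a uniform reduction of oscillation on $S_{\theta r}$, giving the same iteration with the same conclusion.
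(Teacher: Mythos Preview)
The paper does not actually prove this theorem: it is quoted verbatim as Theorem 2.5 of Wang \cite{LW2} and used as a black box, with no argument given. So there is no ``paper's own proof'' to compare against here.

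That said, your outline is essentially the standard route to boundary H\"older regularity for functions in $S(\lambda,\Lambda,h)$ under an exterior cone condition, and it is in the spirit of what one finds in \cite{LW,LW2}. The iteration scheme (one-step oscillation decay plus geometric summation, followed by interpolation with the interior estimate of Theorem~\ref{LW}) is exactly right. Your explicit barrier $\Phi_r(x,t)=A\bigl[(|x-x^*|^2+\mu(t^*-t))^{-p}-R^{-2p}\bigr]_+$ is a little delicate: with $(x^*,t^*)\in E_r\subset S_r$ the quantity $t^*-t$ can change sign on $S_r\cap\Omega_T$, so you must either place $(x^*,t^*)$ at the top of the exterior cone (so $t^*\ge t$ throughout the region where you compare) or check that the expression stays positive and the supersolution inequality survives where $t>t^*$. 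Your fallback via the Krylov--Safonov $L^\varepsilon$ estimate avoids this bookkeeping entirely and is in fact closer to how Wang argues: the exterior Lipschitz cone gives a set of positive relative measure in $S_r$ on which $u$ is pinned near its boundary value, and the measure-to-pointwise estimate then forces the oscillation reduction on $S_{\theta r}$. Either way the exponent $\beta$ you obtain depends on $d,\lambda,\Lambda,C_1$ and the cone aperture, which is all the statement claims.
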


Though \cite{LW2} only considered stationary domain, the regularity result for space-time domains with $\mathcal{C}^1$ boundary can be proved in the same way.

\subsubsection{Interior homogenization}\label{223}

Interior periodic homogenization result for fully nonlinear parabolic equations is proved Marchi \cite{inthomo}. From the results (see Propositions 2.1, 3.2 \cite{inthomo}), we have the existence of the homogenized operator.
\begin{theorem}\label{homo}
Suppose $F$ satisfies conditions (F1)--(F3). Then there exists a unique operator $\bar{F}:\mathcal{M}\times \mathbb{R}^{d+1}\to\mathbb{R}$ satisfying (F1)--(F2) such that the following happens. Let $\{u^\eps\}$ be a sequence of uniformly bounded subsolutions to
\begin{equation}
    \label{main eqn}
    \partial_t u^\eps(x,t)-F(D^2u^\eps,x,\frac{x}{\eps},t,\frac{t}{\eps^2})\leq 0
\end{equation} 
in a bounded open set $O\subset\mathbb{R}^{d+1}$. Then $u^*:=\limsup^* u^\eps$ satisfies
\begin{equation}
    \label{homogenized eqn}
   {\partial_t}u^*(x,t)-\bar{F}(D^2 u^*,x,t)\leq 0    \text{ in } O.
\end{equation}
Similarly if $u^\eps$ are supersolutions to \eqref{main eqn}, then $u_*:=\liminf_* u^\eps$ is a supersolution to \eqref{homogenized eqn}.
\end{theorem}

\subsection{Localization lemmas}

For any unit vector $\nu$, let \[P_{\nu}:=\{ (x,t)\in\mathbb{R}^{d+1}\, |\text{ }x\cdot \nu\geq 0 \}.\] 
Denote
\begin{equation}
    \label{notation xnu}
x_\nu:=(x\cdot \nu), \quad  x':=x-x_\nu\, \nu,
\end{equation} 
and
\[Q_L:=\{(x,t)\in\mathbb{R}^{d+1}\, |\text{ } 0< x_\nu< L, |t|<  L^2,  |x'|< L\}.\]

Since in later sections the parabolic operator with a first order term in $\mathcal{P}_\nu$ is used, we show the following localization lemma for Pucci's operator with a drift. 

\begin{lemma}\label{lemmalpos}
{Suppose that for some $\eps\geq 0$ and $C_0>c_0>0$, $w(x,t)$ satisfies}
\begin{align}&
\frac{\partial}{\partial t}w-\mathcal{P}^+(D^2 w)-\eps |Dw|\leq 0 & \text{ in  }& \quad  Q_{ L} ,\label{para drift}\\
&w(x,t)\leq c_0 &  \text{ on }& \quad \overline{Q}_{ L}\cap\partial P_{\nu} ,\nonumber\\
&w(x,t)\leq C_0 & \text{ on }& \quad \overline{Q}_{ L}.\nonumber
\end{align}
Then there exist constants $c=\frac{\lambda(d-1)}{4d}$ and $C>0$ depending only on $\lambda,\Lambda,d,C_0$ such that for any $R<L$, if $L\eps\leq c$ we have
\[w(x,t)\leq  c_0+C{R}/L\quad\text{ for } (x,t)\in Q_{ R}.\]

\end{lemma}

\begin{proof}
Let $C_1=C_0\max\{1,\frac{\lambda}{2\Lambda(d-1)},\frac{2}{\Lambda(d-1)}\}$. We construct the following barrier,
\[\phi(x,t)= \frac{C_1}{L^{2}}(|x'|^2-\frac{2\Lambda}{\lambda}(d-1)x_\nu^2)+\frac{4C_1\Lambda}{L\lambda}(d-1)x_\nu+c_0+\frac{C_0}{L^4}t^2.\]
It follows from direct computations that in $Q_L$
\begin{align*}
    \frac{\partial}{\partial t}\phi-\mathcal{P}^+(D^2 \phi)&\geq \frac{2C_0}{L^4}t-\frac{\Lambda C_1}{L^2}\,{\text{Tr}} \,(D^2|x'|^2)+\frac{C_1\lambda}{L^2} \,{\text{Tr}}\, (D^2 \frac{2\Lambda}{\lambda}(d-1)x_\nu^2)\\ &\geq-{2C_0}L^{-2}+2C_1\Lambda(d-1)L^{{-2}},
\end{align*}
where we used the definition of the Pucci's operator and $(x,t)\in Q_L$.
Notice
\begin{align*}
    |D\phi|&=2C_1L^{-2}\left(|x'|^2+\left(\frac{2\Lambda}{\lambda}(d-1)\right)^2(x_\nu-L)^2\right)^{\frac{1}{2}}\\
    &\leq 2C_1L^{-1}\left(1+\left(\frac{2\Lambda}{\lambda}(d-1)\right)^2\right)^{\frac{1}{2}}\quad \text{( since $|x'|\leq L$, $|x_\nu|\leq L$ )}\\
    &\leq 4C_1dL^{-1}{\lambda}^{-1}{\Lambda}.
\end{align*}
Therefore inside $ Q_L$, we have
\begin{align*}
    &\quad \frac{\partial}{\partial t}\phi-\mathcal{P}^+(D^2 \phi)-\eps | D\phi|\\
    & \geq -2{C_0}L^{-2}+2C_1(d-1)\Lambda L^{-2}-4\eps C_1L^{-1}d\,\frac{\Lambda}{\lambda}\\
    &\geq C_1\Lambda L^{-2}\left((d-1)-4\eps Ld\lambda^{-1}\right) \quad \text{ (since $C_1\Lambda(d-1)\geq 2C_0$) }.
\end{align*}
If $L\eps\leq  \frac{\lambda(d-1)}{4d}$, the above is non-negative and so $\phi$ is a supersolution.

Also it is not hard to verify that $\phi\geq w$ on the boundary:
when $|x'|=L$ or $x_\nu=L$, we have
\[\phi(x,t)\geq C_1 \min\{1,\frac{2\Lambda}{\lambda}(d-1)\}\geq C_0\geq w(x,t);\] when $t=\pm L^2$, 
\[\phi(x,\pm L^2)\geq C_0\geq w(x,\pm L^2);\] finally when $x_\nu=0$, 
$\phi\geq c_0\geq w$ by the assumption.
From the definition of viscous solution, we know $w(x,t)\leq \phi(x,t)$ in $ Q_L$. In particular for $(x,t)\in Q_R$, we derive
\begin{align*}
    w(x,t)&\leq \phi(x,t)\\
    &\leq \frac{C_1}{L^2}|x'|^2+C_1(d-1)\frac{2\Lambda}{\lambda}\frac{x_\nu}{L}(2-\frac{x_\nu}{L})+c_0\\
   & \leq c_0+C{R} /L
\end{align*}
where $C$ is a constant only depending on $\lambda,\Lambda,d,C_0$.
\end{proof}


\begin{corollary}\label{cols}
(Comparison Principle for bounded solutions in half-space) Suppose $F:\mathcal{M}\times\mathbb{R}^{d+1}\rightarrow \mathbb{R}$ is uniformly elliptic and there are two bounded solutions $v_1,v_2$ of
\[\frac{\partial}{\partial t}v(x,t)- F(D^2v,x,t)= 0\quad \text{ in }\mathcal{P}_\nu.\]
Then
\[\sup_{(x,t)\in P_{\nu}}\left(v_1(x,t)-v_2(x,t)\right)_+\leq \sup_{(x,t)\in \partial{P_{\nu}}}\left(v_1(x,t)-v_2(x,t)\right)_+.\]
\end{corollary}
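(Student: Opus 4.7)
The plan is to reduce the corollary to Lemma \ref{lemmal} by considering the difference $w = v_1 - v_2$ on arbitrarily large shifted parabolic cylinders and letting the outer radius tend to infinity, exploiting the $O(R/L)$ decay in the barrier estimate.

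First I would argue that $w$ is a viscosity subsolution of $\partial_t w - \mathcal{P}^+(D^2 w) \leq 0$ in $P_\nu$. The motivation is the pointwise inequality $F(M_1,x,t) - F(M_2,x,t) \leq \mathcal{P}^+(M_1 - M_2)$, valid for any $M_1, M_2 \in \mathcal{M}$ by uniform ellipticity (F2). In the smooth case this is immediate; for continuous viscosity solutions of the same uniformly elliptic equation it is a standard consequence of the Ishii–Jensen–Lions theorem of sums. Since $v_1, v_2$ are bounded, $w$ is globally bounded by $C_0 := 2(\|v_1\|_\infty + \|v_2\|_\infty)$.

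Next, set $c_0 := \sup_{\partial P_\nu}(v_1 - v_2)_+$, which we may assume is finite. Fix an arbitrary point $(x_0,t_0) \in P_\nu$ and let $x'_0 := x_0 - (x_0\cdot\nu)\nu \in \partial P_\nu$. Put $R := x_0\cdot\nu + 1$ and, for each $L > R$, apply Lemma \ref{lemmal} to $w$ on the translated cylinder $Q_L + (x'_0, t_0)$; the proof of Lemma \ref{lemmal} is manifestly translation-invariant in $(x',t)$, and on the base $\partial P_\nu \cap \overline{Q_L + (x'_0, t_0)}$ we have $w \leq c_0$, while $w \leq C_0$ on the whole translated cylinder. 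Since $(x_0, t_0) \in Q_R + (x'_0, t_0)$, the lemma yields
\[
w(x_0, t_0) \leq c_0 + C(\lambda, \Lambda, d, C_0)\,\tfrac{R}{L}.
\]
Letting $L \to \infty$ with $R$ and $(x_0, t_0)$ fixed gives $w(x_0, t_0) \leq c_0$, and since $(x_0, t_0)$ was arbitrary, the conclusion follows.

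The main obstacle is Step 1, the rigorous justification that $w = v_1 - v_2$ is a viscosity subsolution of the Pucci inequality in the absence of classical regularity. This is a textbook application of the theorem of sums but deserves to be stated explicitly. After that, the rest of the proof is routine: the key point is that the barrier in Lemma \ref{lemmal} decays like $R/L$, so sending $L \to \infty$ (while keeping $R$, hence $(x_0,t_0)$, fixed) converts the local bounded-domain estimate into the desired global half-space comparison.
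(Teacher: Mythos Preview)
Your proposal is correct and is precisely the argument the paper has in mind: the corollary is stated immediately after Lemma \ref{lemmal} with no separate proof, so the intended derivation is exactly to pass from $v_1,v_2$ to the Pucci subsolution $w=v_1-v_2$ (via uniform ellipticity and the theorem of sums, as in the class $\underline{S}(\lambda,\Lambda,0)$ discussed just before) and then apply Lemma \ref{lemmal} on translated boxes $Q_L$ with $R$ fixed and $L\to\infty$. The only cosmetic point is that Lemma \ref{lemmal} is written with the harmless normalization $C_0>c_0>0$; if $c_0=0$ the barrier in its proof still works verbatim, and if $c_0\ge C_0$ the conclusion is trivial.
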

\begin{proof}

Suppose, to the contrary, there exists $(x_0,t_0)\in \mathcal{P}_\nu$ such that \[v_1(x_0,t_0)-v_2(x_0,t_0)> \sup_{(x,t)\in \partial{P_{\nu}}}\left(v_1(x,t)-v_2(x,t)\right)_+=:c_0.\] 
Write $w=v_1-v_2$ and assume $w\leq C_0$. 
Since $F$ is elliptic, $w$ satisfies $\frac{\partial}{\partial t}w- \mathcal{P}^+(D^2w)\leq 0.$
Take $R$ to be large enough such that $(x_0,t_0)\in Q_R$. By Lemma \ref{lemmalpos}, for any $L>R$ we have
$w(x,t)\leq c_0+C{R}/L$ in $Q_R$. If taking $L\to \infty$, we get $(v_1-v_2)(x_0,t_0)\leq \sup_{Q_R}w\leq c_0$ which contradicts with the assumption.
\end{proof}

\begin{remark}
Let us point out that for the equation \eqref{para drift} in the domain $\mathcal{P}_\nu$, we may not have uniqueness among bounded continuous functions when $\eps>0$. Indeed, consider
\begin{equation*}
\left\{\begin{aligned}&
w_t- w_{xx}-\eps w_x=0 &\text{ for }x>0,t\in\mathbb{R},\\
&    w(0,t)=1. &
\end{aligned}
\right.
\end{equation*}
Then $w=1,w=e^{-{\eps}x}$ are obviously two solutions.
Though uniqueness fails, Lemma \ref{lemmalpos} implies that if the first order term is small enough ($L\eps<c$), we can still bound the difference of two solutions in the interior by the difference of their boundary values plus a small error.
\end{remark}

The claim of Lemma \ref{lemmalpos} is still valid when the boundary of the domain is (only) varying in time. The following corollary is stated in a version which can be directly applied in Section 5.

\begin{corollary}\label{cmp coro}
Let $f:\mathbb{R}\to\mathbb{R}$ be a $\mathcal{C}^1$ function with $f(0)=0$, and let $\nu\in\mathbb{S}^{d-1}$, $\eps\in (0,1)$.
Using the notation \eqref{notation xnu}, denote
\[
D(f,L):=\{(x,t)\,|\,\sqrt{\eps}f(t)<x_\nu<L+\sqrt{\eps}f(t),\,|x'|< L,\,|t|\leq L^2\}
\] 
and 
\[
\partial_l D(f,L):=\{(x,t)\,|\,x_\nu= \sqrt{\eps}f(t),\,|x'|< L,\,|t|< L^2\}.
\] 
Assume that the operator $F_\eps$ satisfies (G1)--(G3) and $|F_\eps(0,\cdot,\cdot)|_\infty\leq C_1$ for some $C_1>0$. Suppose that $w$ is a bounded solution to
\[\partial_t w-\eps F_\eps\left(\eps^{-1} D^2w,\frac{x}{\sqrt{\eps}},\frac{t}{\eps}\right)-\sqrt{\eps}\,\xi\cdot Dw= 0\]
in $D$ for some bounded vector field $\xi=\xi(x,t)$, and for any $\delta\in (0,1)$, $|w|\leq \delta $ on $\partial_l D(f,L)$. Then there exists $c=c(C_1)>0$ and $R=R(\delta)<L$ such that if \[L\sqrt{\eps}(|\xi|_\infty+|f'|_\infty)<c\quad\text{ and }\quad  L^2\eps\leq c\delta,\] 
we have
\[|w|\leq 3\delta \quad \text{ in } D(f,R).\]
\end{corollary}

\begin{proof}
Let
$\tilde{w}:=w(x+\sqrt{\eps}f(t)\nu,t)$ and then $\tilde{w}$ satisfies
\[\partial_t \tilde{w}-\eps F_\eps\left(\eps^{-1}D^2\tilde{w},\frac{x+\sqrt{\eps}f(t)\nu}{\sqrt{\eps}},\frac{t}{\eps}\right)-\sqrt{\eps}\,\xi\cdot D\tilde{w}-\sqrt{\eps}f'(t)\nu\cdot D\tilde{w}= 0.\]
Let us denote the above operator $\eps F_\eps(\eps^{-1}M,...)$ as $F_{\eps,f}(M,x,t)$.
In view of (G2) and boundedness of $F_{\eps,f}(0,x,t)$, we have
\[
\mathcal{P}^-(M)-C_1\eps\leq F_{\eps,f}( M,\cdot,\cdot)\leq   \mathcal{P}^+(M)+C_1\eps,
\]
which implies that that in $Q_L$,
\[\partial_t \tilde{w}-\mathcal{P}^+(D^2 \tilde{w})-c_1\sqrt{\eps}| D\tilde{w}|-C_1\eps\leq 0,\]
\[\partial_t \tilde{w}-\mathcal{P}^-(D^2 \tilde{w})-c_1 \sqrt{\eps} |D\tilde{w}|+C_1\eps\geq 0\]
where $c_1:=|\xi|_\infty+|f'|_\infty$.
To get rid of the zero order term, let $w_\pm:=\tilde{w}\pm C_1\eps t$. Applying Lemma \ref{lemmalpos} to both $w_+$ and $-w_-$ yields
\[w_\pm\leq \delta+C_1\eps L^2+C{R}/L\]
in $Q_R$ if $L\sqrt{\eps} c_1$ is small enough. We obtain
\[|w|\leq \delta+C_1\eps (L^2+R^2)+C{R}/L\]
in $D(f,R)$ and the conclusion follows if taking $R=\frac{L\delta}{C}$ and using the assumption that $L^2\eps$ is small.
\end{proof}

\section{The Boundary Layer Problems}\label{lateralb}

In this section, for each lateral boundary point we will analyze the cell problems and study their boundary layer limits. In the end, we will identify the lateral homogenized boundary data to be the boundary layer limit.

\subsection{Irrational normal case}\label{3.1}

Before studying the cell problems, we recall Lemma 2.7 in \cite{kim} which shows that any points on half-planes with irrational normal $\nu$ can be approximated in some sense by a point in $\mathbb{Z}^d$.

\begin{lemma}\label{lemm discpcy}
Let $\nu\in \mathbb{S}^{d-1}$ be irrational. There exists a function $\omega_\nu:\mathbb{Z}^+\to\mathbb{R}^+$ satisfying $\omega_\nu(N)\to 0$ as $N\to \infty$ such that the following holds. For any $x_0\in\mathbb{R}^d$ and then any {$x\in \{y\,|\,(y-x_0)\cdot\nu=0\}$} and any $N>1$, there exists $m\in\mathbb{Z}^d$ such that
\[|x-m|\leq N, \quad dist(m,\partial P_\nu)\leq \omega_\nu(N).\]
\end{lemma}

The function $\omega_\nu$ is given through a discrepancy function, see \cite{kuipers1975uniform,wf}.
 

\smallskip

Now fix $(x_0,t_0)\in \partial_l\Omega$ with $t_0>0$ and write $\nu$ as the interior spatial normal at $(x_0,t_0)$. 
In the case when $\nu$ is irrational, consider
\[
v^\eps(x,t):=u^\eps(x_0+\eps x,t_0+\eps^2 t).
\]
Then the corresponding domain $\Omega^\eps=\bigcup_{t\in\bbR}(\eps^{-1}(\Omega(\eps^2 t+t_0)-x_0)\times\{t\})$ converges to $P_\nu$ locally uniformly in Hausdorff distance. 
As explained in the introduction, by passing to a subsequence of $\eps\to 0$, we assume $\eps^{-1}x_0$, $\eps^{-2}t_0 $ converge to $z,\tau$ in $\mathbb{R}^d/\mathbb{Z}^d,\mathbb{R}/\mathbb{Z}$ respectively. Fix one such $z,\tau$ for a moment. Formally we derive
\begin{equation*}
\text{({I})}\quad \left\{
\begin{aligned}&
\frac{\partial}{\partial t}v^{z,\tau}(x,t)- F^0(D^2v^{z,\tau},x+z,t+\tau)=0 & \quad \text{ in }&\quad   P_{\nu},\\
&v^{z,\tau}(x,t)=g^0(x+z,t+\tau)& \quad \text{ on }&\quad   \partial P_{\nu}
\end{aligned}
\right.
\end{equation*}
where $g^0(y,s):= g(x_0,y,t_0,s)$, $F^0(M,y,s):=F^{x_0,t_0}(M,y,s)$.
We call this the cell problem of irrational normal case.
Since $u^\eps$ is uniformly bounded, we solve for bounded solutions of (I). Lemma \ref{lemmalpos} and Perron's method gives the well-posedness of the problem.

In the following proposition, we show the existence of the boundary layer limit of (I).

\begin{lemma}\label{irgbar}
Suppose $\nu$ is irrational, and $v^{z,\tau}(x,t)$ is the unique solution to equation (I). {Then for any $(x,t)\in P_\nu$,}
\[
\lim_{R\rightarrow \infty} v^{z,\tau}(x+R\nu,t)
\]
exists and the limit is independent of $x,t,z,\tau$ which is denoted as $\varphi(F^0,g^0,\nu)$. Moreover, for any $N\geq 1$, we have
\begin{equation}
    \label{rate cvg irr}\left|v^{z,\tau}(x+R\nu,t)-\varphi(F^0,g^0,\nu)\right|\leq C\left(({N}/{R})^\beta+w_{\nu}(N)^\beta\right)
\end{equation}
for some constant $C=C(\nu,g^0,F^0)$.
Here $\beta$ is from Theorem \ref{LW0} and $\omega_\nu(\cdot)$ is given in Lemma \ref{lemm discpcy}.
\end{lemma}

\begin{proof}
The lemma is analogous to Lemma 3.1 and Lemma 5.2 in \cite{wf}. First we show that the limit exists and is independent of $x,t$. By Lemma \ref{lemm discpcy}, for any $(x,t)\in \partial P_\nu$ and $ N\geq 1$, there exists $m_1\in \mathbb{Z}^d$ such that
\[|x-m_1|\leq N \text{ and } dist(m_1,\partial P_\nu)\leq \omega_\nu(N).\]
Also we can select $m_2\in \mathbb{Z}$ be such that $|t-m_2|\leq 1$.
First let us assume $h:=m_1\cdot\nu=dist(m_1,\partial P_\nu)>0$. Then consider
\[\tilde{v}^{z,\tau}(x,t)=v^{z,\tau}(x-m_1,t-m_2),\]
which satisfies

\begin{equation*}
\quad \left\{
\begin{aligned}
&\frac{\partial}{\partial t}\tilde{v}^{z,\tau}(x,t)- F^0(D^2\tilde{v}^{z,\tau},x+z,t+\tau)=0 & \quad \text{ for }& (x,t)\in P_{\nu}+h\nu,\\
&\tilde{v}^{z,\tau}=g^0(x+z,t+\tau)& \quad \text{ for }& (x,t)\in \partial P_{\nu}+h\nu.
\end{aligned}
\right.
\end{equation*}
By H\"{o}lder estimate and continuity of the boundary data, 
$|\tilde{v}^{z,\tau}-v^{z,\tau}|\leq C h^\beta$ on $\partial P_{\nu}+h\nu$. Since $v^{z,\tau}$ and $\tilde{v}^{z,\tau}$ satisfy the same equation, we compare the two in $P_\nu+h\nu$ to get 
\[|\tilde{v}^{z,\tau}(R\nu,0)-v^{z,\tau}(R\nu,0)|\leq C h^\beta\leq C\omega(N)^\beta.\]
By Corollary \ref{cor LW}, 
\begin{align*}
    |v^{z,\tau}(x+R\nu,t)-v^{z,\tau}(R\nu,0)|&\leq |v^{z,\tau}(x+R\nu,t)-v^{z,\tau}(m_1+R\nu,m_2)|+|\tilde{v}^{z,\tau}(R\nu,0)-v^{z,\tau}(R\nu,0)|\\
&\leq \osc_{(\{(x',t')\,|\,|(x',t')-(x+R\nu,t)|\leq CN\})}\left(v^{z,\tau}\right)+C\omega_\nu(N)^\beta&\\
&\leq C\left(({N}/{R})^\beta+\omega_\nu(N)^\beta\right).
\end{align*}
By maximum principle, 
\begin{equation}
    \label{osc vztau}
    \osc_{( P_\nu)}v^{z,\tau}(\cdot+R\nu)\leq \osc_{(\partial P_\nu)}v^{z,\tau}(\cdot+R\nu)\leq C\left(({N}/{R})^\beta+\omega_\nu(N)^\beta\right).
\end{equation}
Thus $\lim_{R\rightarrow \infty} v^{z,\tau}(x+R\nu,t)$ exists and is independent of the choice of $x,t$. If $h=-dist(m_1,\partial P_\nu)<0$, we only need to compare $\tilde{v}^{z,\tau}$ and $v^{z,\tau}$ in $P_\nu$ instead of $P_\nu+h\nu$ to complete the proof.

\smallskip

Next we show that the limit is also independent of $z,\tau$. Suppose $v^{0}$ is the solution to equation (I) with $\{z,\tau\}$ replaced by $\{0,0\}$. Again let $m_1\in\mathbb{Z}^d$ be such that 
\[|z+m_1|\leq N, dist(m_1+z,\partial P_\nu)\leq \omega_\nu(N).\]
Let $h:=(m_1+z)\cdot \nu$ and without loss of generality we assume $h\geq 0$.
Then $\tilde{v}(x,t):=v^{z,\tau}(x-z-m_1,t-\tau)$ satisfies
\begin{equation*}
\quad \left\{
\begin{aligned}
&\frac{\partial}{\partial t}\tilde{v}(x,t)- F^0(D^2\tilde{v},x,t)=0 & \quad \text{ for }& (x,t)\in P_{\nu}+h\nu,\\
&\tilde{v}=g^0(x,t)& \quad \text{ for }& (x,t)\in \partial P_{\nu}+h\nu.
\end{aligned}
\right.
\end{equation*}
Similarly as above, we compare $\tilde{v}$ with $v^{0}$ to find that
\begin{equation}
    \label{osc ztau}|v^{z,\tau}(R\nu,0)-v^{0}(R\nu,0)|\leq C\left(({N}/{R})^\beta+\omega_\nu(N)^\beta\right)
\end{equation}
which shows that the limit is indeed independent of $z,\tau$. Finally \eqref{osc vztau} and \eqref{osc ztau} give the rate of convergence \eqref{rate cvg irr} and the constant $C$ only depends on $\nu,g^0,F^0$.
\end{proof}
\begin{remark}

In general, in the lemma we cannot remove the condition that $\nu$ is irrational.
But if $\{x\,|\,(x-x_0)\cdot \nu=0\}$ passes through the original point, it is not hard to check that different $z$'s just cause shifts along hyperplane $\partial P_\nu$ and the limit ${\lim}_{R\to\infty} v^{z,\tau}(x+R\nu,t)$ is then again independent of $z,\tau$ (see problem 1.1 in \cite{4}). 
\end{remark}

\subsection{Rational normal case}\label{3.2}

Fix $(x_0,t_0)\in\Gamma_2$ with rational normal $\nu$. Denote $c(x,t)$ as the interior normal speed of $(x,t)\in \partial_l\Omega$. Due to the local spacial flatness assumption, $c=c(t)$ in $ S_r(x_0,t_0)\cap \partial_l\Omega=S_r(x_0,t_0)\cap \Gamma_2$ for some $r>0$ and it is nonzero.
As described in the introduction, we set
\[v^\eps(x,t):=u^\eps(x_0+x_\eps+c_0 t_\eps\nu,t_0+ t_\eps)\]
where $x_\eps=\eps^\frac{1}{2}y+\eps x$, $t_\eps=\eps s+\eps^2 t$ and $c_0=c(t_0)$. 
The equation becomes
\[
\partial_tv^\eps(x,t)-\eps c_0\nu\cdot Dv^\eps-\eps^2 F(\eps^{-2}D_x^2v^\eps,x_0+x_{\eps},\eps^{-1}(x_0+x_\eps+c_0 t_\eps\nu),t_0+t_\eps,\eps^{-2}(t_0+t_\eps)) =0  \]
By passing to a subsequence of $\eps\rightarrow 0$, we assume $(\eps^{-1}x_0+\eps^{-\frac{1}{2}}y)$, $(\eps^{-2}t_0+{\eps}^{-1}s)$ converges to $z,\tau$ in $\mathbb{R}^d/\mathbb{Z}^d,\mathbb{R}/\mathbb{Z}$ respectively. Then formally, we derive our first boundary layer problem:
\begin{equation*}
\text{(R1)}\quad \quad\left\{
\begin{aligned}&
\frac{\partial}{\partial t}v^{z,\tau}_{s}(x,t)- F^0(D^2v^{z,\tau}_{s},x+z+{c_0 s \nu},t+\tau)=0 &  \text{ in }& \quad  P_\nu,\\
&v^{z,\tau}_{s}(x)=g^0(x+z+ c_0 s\nu,t+\tau ) &  \text{ on }&\quad   \partial P_\nu.
\end{aligned}
\right.
\end{equation*}
Here $g^0,F^0$ are the same as before (as in (I)). By Perron's method and comparison principle, there exists a unique bounded solution $v^{z,\tau}_{s}$. For simiplicity of notation, we may write $c=c_0$ later. We prove the following lemma.

\begin{lemma}\label{bd lyer lmt R1}
Let $v^{z,\tau}_{s}$ be the unique bounded solution of (R1) for some $z,\tau,s$. Then for any $(x,t)$, ${\lim}_{R\rightarrow \infty}v^{z,\tau}_{s}(x+R\nu,t)$ exists and it only depends on $z\cdot\nu+cs,\nu,F^0,g^0$.
Let us write the limit as $\chi^{z}_{\nu}(cs)=\chi^{z}_{\nu,s}(F^0,g^0,\nu,z\cdot\nu+cs)$. Then for some constant $N$ only depending on $\nu$ and for all $x,t\in P_\nu$, we have
\begin{equation}
    \label{r1 rt est}
    |v_s^{z,\tau}(x+R\nu,t)-\chi^{z}_{\nu}(cs)|\leq C\left({N}/{R}\right)^\beta.
\end{equation}
Moreover, if $\nu=\frac{\hat{\nu}}{|\hat{\nu}| }$ with irreducible $\hat{\nu}\in \mathbb{Z}^d$, then $\chi^{z}_{\nu}(c\,\cdot)$ is a periodic function with periodicity $\frac{1}{|c\,\hat{\nu}|}$.
\end{lemma}
\begin{proof}
Fix any $(x,t)\in P_\nu$. Since $\nu$ is rational, we can take $m_1\in \mathbb{Z}^d,m_2\in \mathbb{Z}$ such that 
\[|x-m_1|\leq N(\nu),\quad m_1\cdot \nu=0,\quad |t-m_2|\leq 1\]
where $N(\nu)$ is a constant only depending on $\nu$.
Since
\[
\tilde{v}^{z,\tau}_{s}(x,t)={v}^{z,\tau}_{s}(x-m_1,t-m_2)
\] 
is also a solution to (R1) with the same boundary data, by uniqueness $\tilde{v}^{z,\tau}_{s}:={v}^{z,\tau}_{s}$. By Corollary \ref{cor LW}
\begin{equation}\label{r1 uniformins}
|{v}^{z,\tau}_{s}(x+R\nu,t)-{v}^{z,\tau}_{s}(R\nu,0)|=|{v}^{z,\tau}_{s}(x-m_1+R\nu,t-m_2)-{v}^{z,\tau}_{s}(R\nu,0)|\leq C({N}/{R})^\beta.
\end{equation}
Then similarly as done in Lemma \ref{irgbar}, we conclude with the help of the comparison principle that the boundary layer limit
\[\chi^{z,\tau}_{\nu,s}:={\lim}_{R\rightarrow \infty}v^{z,\tau}_{s}(x+R\nu,t)\]
exists and it is independent of $x,t$.

Note that ${v}^{z+cs\nu,\tau}_{0}={v}^{z,\tau}_{s}$. Thus we have $\chi^{z+cs\nu,\tau}_{\nu,0}=\chi^{z,\tau}_{\nu,s}$.
Now write $z=z'+z_\nu $ with $z_\nu=(z\cdot\nu)\nu$. We are going to show the independence of $\chi^{z,\tau}_{\nu,s}$ on $z',\tau$. Set 
\[
\tilde{\tilde{v}}^{z,\tau}_{s}(x,t):={v}^{z,\tau}_{s}(x-z',t-\tau).
\] 
which is then the unique solution to equation (R1) in $P_\nu$ with $z,\tau$ replaced by $z_\nu,0$ respectively. Since the boundary layer limit is independent of $x,t$, we have
\[\chi^{z_\nu,0}_{\nu,s}={\lim}_{R\rightarrow \infty}v^{z,\tau}_{s}(x-z'+R\nu,t-\tau)={\lim}_{R\rightarrow \infty}v^{z,\tau}_{s}(x+R\nu,t)=\chi^{z,\tau}_{\nu,s}.\]
So $\chi^{z,\tau}_{\nu,s}$ only depends $z\cdot\nu+cs,\nu$, and we might denote $\chi^{z,\tau}_{\nu,s}$ as $\chi^{z}_{\nu}(cs)$. 
The rate of convergence estimate \eqref{r1 rt est} follows from  \eqref{r1 uniformins}.

Furthermore, by the geometry there is $i\in \mathbb{Z}^d$ such that
$y:=i-\frac{\nu}{|\hat{\nu}|}\in \partial P_\nu$. Thus
\[cs\nu-y=c\left(s+(c|\hat{\nu}|)^{-1}\right)\nu\quad\text{ mod }\mathbb{Z}^d.\]
By periodicity and $y\in \partial P_\nu$, we have $v_s^{z,\tau}(x-y,t)=v_{s+(c|\hat{\nu}|)^{-1}}^{z,\tau}(x,t)$. Therefore the boundary layer limit satisfies $\chi^{z}_{\nu}(cs)=\chi^{z}_{\nu}(cs+(|\hat{\nu}|)^{-1})$.

\end{proof}

Then we look at a larger scale by considering
\[w^{\eps}(y,s): = u^\eps(x_0+\eps^\frac{1}{2}y+c\eps  s\nu,t_0+\eps  s)\]
which solves
\begin{equation}
    \label{derive R2}
    \begin{aligned}
    \partial_sw^\eps&-(\eps^\frac{1}{2} c)\nu\cdot D_yw^\eps-\\
    &\eps F(\eps^{-1}D_y^2w^\eps,x_0+\eps^\frac{1}{2}y+c\eps s\nu,\eps^{-1}(x_0+\eps^\frac{1}{2}y)+cs\nu),t_0+\eps s,\eps^{-2}(t_0+\eps s)) =0
    \end{aligned}
\end{equation}
As described in the introduction, we will use $\chi^{z}_{\nu}(c\,\cdot)$ as the boundary data for the second boundary layer problem. 
Suppose $\eps^{-\frac{1}{2}}x_0\rightarrow z_0,\eps^{-1}t_0\to\tau_0$ along a subsequence of $\eps\to 0$. This and \eqref{derive R2} suggest us to consider the following operator
\begin{align*}
     \partial_s w^\eps-\lim_{\eps\to 0} F^0(D_y^2 w^\eps,\eps^{-\frac{1}{2}}y+z_0+ cs \nu,\eps^{-1}s+\tau_0)=0
\end{align*}
which is itself a homogenization problem.

By Theorem \ref{homo}, there exists a unique homogenized operator, written as $\bar{F}^{0}(M,z_0+cs\nu,\tau_0)$, associated to $ F^0(M ,\eps^{-\frac{1}{2}}y+z_0+ cs \nu,\eps^{-1}s+\tau_0)$. We need the following lemma.

\begin{lemma}\label{homobarF}
For each $M$, the homogenized operator $\bar{F}^{0}(M,z+ct \nu,\tau)$ is independent of $z+ct\nu,\tau$. 
\end{lemma}
\begin{proof}
Let $\bar{F}^1(M,z+ct\nu,\tau)$ and $\bar{F}^0(M)$ be the homogenized operators for $F^0(M,\eps^{-1}x+z+ct\nu,\eps^{-2}t+\tau)$ and $F^0(M,\eps^{-1}x,\eps^{-2}t)$ respectively in the sense of Theorem \ref{homo}. We only need to show $\bar{F}^1=\bar{F}^0$. 

By Theorem 2.1 \cite{inthomo}, Marchi showed that there exists one unque real number $\bar{F}^1$ such that the following problem has a solution:
\begin{equation*}
\left\{
\begin{aligned}&
\frac{\partial}{\partial \sigma }\rho(\xi,\sigma)- F^0(D_\xi^2\rho+M,\xi+z+ct\nu,\sigma+\tau)=\bar{F}^1 \quad  \text{for } (\xi,\sigma)\in \mathbb{R}^{d+1},\\
&\rho=\rho(\xi,\sigma)\quad  \text{ is periodic in } \xi,\sigma
\end{aligned}
\right.
\end{equation*}
and the $\bar{F}^1$ is defined to be $\bar{F}^1(M,z+ct\nu,\tau)$. We notice that if $\rho(\xi,\sigma)$ is a solution to the above problem, then $\rho(\xi-z-ct\nu,\sigma-\tau)$ is a solution to
\begin{equation*}
\left\{
\begin{aligned}
&\frac{\partial}{\partial \sigma }\rho(\xi,\sigma)- F^0(D^2\rho+M,\xi,\sigma)=\bar{F}^1 \quad  \text{for } (\xi,\sigma)\in \mathbb{R}^{d+1},\\
&\rho=\rho(\xi,\sigma)\quad  \text{ is periodic in } \xi,\sigma.
\end{aligned}
\right.
\end{equation*}
From this and \cite{inthomo}, we can conclude that $\bar{F}^1=\bar{F}^0$.
\end{proof}
With this lemma, we derive our second layer cell problem:
\begin{equation*}
\text{({R2})}\quad\quad \left\{
\begin{aligned}&
\frac{\partial}{\partial t}w^z- \bar{F}^0(D^2w^z )=0 &  \text{ in }&\quad  P_{\nu},\\
&w^z(x,t)=\chi^{z}_{\nu}(ct) &  \text{ on }&\quad  \partial P_{\nu}.
\end{aligned}
\right.
\end{equation*}

As before, we study the boundary layer limit of $w^z$.

\begin{lemma}\label{R1R2}
Consider problem (R2) with rational $\nu$ and let $\psi^z$ be the unique solution of (R2). Then ${\lim}_{R\rightarrow \infty}w^z(x+R\nu,t)$ $(=:\psi(F^0,g^0,\nu)=\psi)$ exists and the limit
is independent of $x,t,z$ and $|c|$. Furthermore for some constant $N$ only depending on $\nu$ and for all $x,t\in P_\nu$, we have
\begin{equation}
    \label{r2 rt est}
    |w^{z}(x+R\nu,t)-\psi|\leq C\left({N}/{R}\right)^\beta.
\end{equation}
\end{lemma}

\begin{proof}
By the same proof of Lemma \ref{bd lyer lmt R1},
${\lim}_{R\rightarrow \infty}w^z(x+R\nu,t)$
exists and is independent of $x,t$.
Again by Lemma \ref{bd lyer lmt R1}, in terms of $z,\nu,t$, $\chi_\nu^z(ct)$ only depends on $z\cdot\nu+ct$. Therefore $\chi_\nu^z(ct)=\chi_\nu^0\left(ct+{z\cdot\nu}\right).$ From the equation, \[\tilde{w}(x,t):=w^0\left(x,t+\frac{z\cdot\nu}{c}\right)\] 
solves (R2) and by uniqueness $\tilde{w}(x,t)=w^z(x,t)=w^0\left(x,t+\frac{z\cdot\nu}{c}\right)$ (here we need $c\neq 0$). While the boundary layer limits of $w^0\left(x,t+\frac{z\cdot\nu}{c}\right)$ and $w^0(x,t)$ coincide which shows that the limit $\psi$ is independent of $z$.

To show the independence on $|c|$, consider $w'(x,t):=w^z(\frac{x}{\sqrt{|c|}},\frac{t}{|c|})$. Since $\bar{F}^0$ satisfies condition (G4), it can be checked that $w'$ solves (R2) with $c$ replace by $\frac{c}{|c|}$. By definition, the boundary layer limits of $w',w^z$ agree and hence $\psi$ is independent of $|c|$.
Finally \eqref{r2 rt est} follows the same as the proof of Lemma \ref{bd lyer lmt R1}.

\end{proof}

\begin{remark}\label{def psinu}
From the two cell problems (R1)(R2), we can also write $\psi=\psi\left(F^{x,t},g^{x,t},\nu_{x,t},\frac{c_{x,t}}{|c_{x,t}|}\right)$ for $(x,t)\in \Gamma_2$.
Due to the lemma, there is no loss of assuming $z=\tau=0$ in the two cell problems. We might drop them from the notations of $w^z, v_s^{z,\tau}$. 

Let us also mention that the double homogenization procedure used for points in $\Gamma_2$ also works for the irrational case. Indeed if $(x_0,t_0)\in\Gamma_1$, $\chi^{z}_{\nu}(c\,\cdot)$ is just a constant and in this situation $\psi=\chi^z_\nu$.
\end{remark}

At the end of the section, we consider convex and translation invariant operators.  
\begin{lemma}\label{concave}
Suppose $F=F(M)$ is a uniformly elliptic, convex and homogeneous operator, $\nu$ is any unit vector. Function $g(x,t)$ is H\"{o}lder continuous and $\mathbb{Z}^d/\mathbb{Z}$ periodic in $x/t$. Let $u$ be a solution to
\begin{equation*}
\left\{
\begin{aligned}
&\frac{\partial}{\partial t}u- {F}(D^2 u )=0 &  \text{ in }&\quad  P_{\nu},\\
&u(x,t)=g(x,t) &  \text{ on }&\quad  \partial P_{\nu}.
\end{aligned}
\right.
\end{equation*}
Then $\bar{g}:=\lim_{R\rightarrow \infty}u(R\nu,0)$ exists and 
\[\bar{g}\geq \int_{[0,1)^{n+1}} g(x,t)dxdt.\]
The opposite inequality holds in case when $F$ is concave. In particular if $F(M)$ is linear, we have
\[\bar{g}= \int_{[0,1)^{n+1}} g(x,t)dxdt.\]
\end{lemma}
The proof follows from the one of Lemma 3.6 \cite{wf} and it is given by Riesz Representation Theorem.

\begin{corollary}
Suppose the homogenized operator $\bar{F}^0(M)$ in (R2) is linear. Then the boundary layer limit of (R2) satisfies: \[\psi=|c\hat{\nu}|\int_{t=0}^{|c\hat{\nu}|^{-1}}\chi_\nu^z(ct)dt=|\hat{\nu}|\int_{t=0}^{|\hat{\nu}|^{-1}}\chi_\nu^z(t)dt.\]
\end{corollary}

Later we show in section \ref{discont} that in general if the operator is not linear, the corresponding boundary layer limit might not be the same as the average of the original boundary data.

\section{Continuity of the Homogenized Boundary Data}\label{seccont}

In this section we mainly prove the following continuity property.
\begin{theorem}\label{tcont}
Let $\Gamma_T:=\Gamma_1(T)\cup\Gamma_2(T)\subset \partial_l\Omega$ and define
\begin{equation}\label{def bar g}
    \bar{g}(x,t)=\left\{\begin{aligned}
&    \varphi(F^{x,t},g^{x,t},\nu_{x,t})& \quad \text{ if }(x,t)\in \Gamma_1(T)\quad \text{( see Lemma \ref{irgbar} ) },\\
&    \psi\left(F^{x,t},g^{x,t},\nu_{x,t},\frac{c_{x,t}}{|c_{x,t}|}\right)& \quad \text{ if }(x,t)\in \Gamma_2(T)\quad \text{( see Remark \ref{def psinu} )}.
    \end{aligned}\right.
\end{equation}
Then $\bar{g}(x,t)$ is continuous on $\Gamma_T$.
\end{theorem}
Before the proof of the theorem, we need two lemmas. The first one shows the continuity of the boundary layer limit on the direction $\nu$ when $\nu$ is irrational. It is basically the parabolic generalization of Lemma 3.4 \cite{wf}. For the completion, we will provide the proof. 

\begin{lemma}\label{lcont1}
Suppose $F,g$ satisfy condition (G), $\nu_i\in S^{d-1}$ for $i=0,1$ and $\nu_0$ is irrational. Suppose for $i=0,1$, $v^{i}$ solves
\[
\left\{\begin{aligned}&
\frac{\partial}{\partial t}v^{i}(x,t)- F(D^2v^{i},x,t)=0 &  \text{ in }&\quad   P_{\nu_i},\\
&v^{i}(x,t)=g(x,t)&  \text{ on }&\quad  \partial P_{\nu_i}.
\end{aligned}
\right.
\]
Then there exists $C(\nu_0)$ such that for any $R>1$, if $|\nu_0-\nu_1|$ is sufficiently small, we have
\[\sup_{x,t\in P_{\nu_1}}|\varphi(F,g,\nu_0)-v^{1}(x+R\nu_1,t)|\leq C\left(R^{\frac{3}{2}\beta}|\nu_0-\nu_1|^\beta+R^{-\frac{1}{2}\beta}+w_{\nu_0}^\beta(R^\frac{1}{2})\right),\]
where $\varphi(F,g,\nu_0)$ is the boundary layer limit of $v^0$, $w_{\nu_0}$ is given in Lemma \ref{lemm discpcy} and $\beta$ in Theorem \ref{LW0}. 
Furthermore, if $\nu_1$ is also irrational, as a corollary we have for any $\delta>0$, there exists $C=C(\delta,\nu_0)$ such that
\[|\varphi(F,g,\nu_0)-\varphi(F,g,\nu_1))|\leq C|\nu_0-\nu_1|^\beta+\delta.\]
\end{lemma}

\begin{proof}
By Lemma \ref{irgbar}, for any $N>0$ there exists a constant $C$ independent of $ N,R$ such that
\[\sup_{x,t\in P_{\nu_0}}|v^{0}(x+R\nu,t)-\bar{g}(G,h,\nu_0)|\leq C\left(({N}/{R})^\beta+w_{\nu_0}(N)^\beta\right).\]

Now we compare $\varphi(F,g,\nu_0)$ with $v^{1}(x_1+R\nu,t_1)$ for any fixed $(x_1,t_1)\in\partial P_{\nu_1}$. Let $\tilde{v}^{i}(x,t):=v^{i}(x_1+x,t_1+t)$ which then solves
\begin{equation*}
\left\{\begin{aligned}&
\frac{\partial}{\partial t}\tilde{v}^{i}(x,t)- F(D^2
\tilde{v}^{i},x+x_1,t+t_1)=0 &  \text{ in }&\quad   P_{\nu_i},\\
&\tilde{v}^{i}(x,t)=g(x+x_1,t+t_1)&  \text{ on }&\quad  \partial P_{\nu_i}.
\end{aligned}
\right.
\end{equation*}
Again by Lemma \ref{irgbar}, since for the irrational case the boundary layer limit is independent of $z,\tau$, we have \[\varphi(F,g,\nu_0)=\varphi(F(,\cdot+x_1,\cdot+t_1),g(,\cdot+x_1,\cdot+t_1),\nu_0).\]
By shifting the operator and the boundary data, we can assume $x_1=0,t_1=0$.


For each $i=0,1$ and any $L>R\geq 1$, denote
\[Q^i_L:=\{(x,t)\,|\,0\leq x_{\nu_i}\leq L,-L^2\leq  t\leq L^2, |x_i'|\leq L\}\]
where $x_{\nu_i}=x\cdot{\nu_i}, x_i'=x-x_{\nu_i}\nu_i$, and \[\tilde{\nu}=\frac{\nu_0+\nu_1}{|\nu_0+\nu_1|},\, R'=\frac{|\nu_0+\nu_1|}{2}R,\, L'=0.9 L,\,\delta=L|\nu_0-\nu_1|,\,\tilde{Q}_{L'}=\overline{Q^{\tilde{\nu}}_{ L'}}+\delta\tilde{\nu}.\]
If $\delta$ is small enough, we can have
$\tilde{Q}_{L'} \subset Q^0_L \cap Q^1_L$. 
By Theorem \ref{cont}, H\"{o}lder continuity of solutions,
\[|v^{0}(x,t)-v^{1}(x,t)|\leq C\delta^\beta \text{ for } (x,t)\in \tilde{Q}_{L'}\cap\{x'\,|\,x'\cdot\tilde{\nu}=\delta\}.\]
Now we apply Lemma \ref{lemmalpos} in the region $\tilde{Q}_{L'}$ to obtain 
\begin{align*}
    |v^{0}(R\nu_0,0)-v^{1}(R\nu_1,0)|&=|v^{0}(R\nu_1,0)-v^{1}(R\nu_1,0)|+|v^{0}(R\nu_0,0)-v^{0}(R\nu_1,0)|\\
    &\leq C\delta^\beta+C\frac{R'}{L'}+C|R{\nu_0}-R{\nu_1}|^\beta\quad\text{ ( by H\"{o}lder continuity of $v^0$ )}\\
    &\leq C L^\beta|\nu_0-\nu_1|^\beta+C{R}/L.
\end{align*}
Next select $L=NR, N=R^\frac{1}{2}$ and then by \eqref{rate cvg irr}, we obtain
\begin{align*}
    |\varphi(F,g,\nu_0)-v^{1}(R\nu_1,0)|&\leq |\varphi(F,g,\nu_0)-v^{0}(R\nu_1,0)|+|v^{1}(R\nu_0,0)-v^{1}(R\nu_1,0)|\\
    &\leq C({N}/{R})^\beta+Cw_{\nu_0}(N)^\beta+C(RN)^\beta|\nu_0-\nu_1|^\beta+C/N\\
    &\leq C\left(R^{\frac{3}{2}\beta}|\nu_0-\nu_1|^\beta+R^{-\frac{1}{2}\beta}+w_{\nu_0}(R^{\frac{1}{2}})^\beta\right).
\end{align*}
We proved the first claim.

If $\nu_1$ is irrational, we have 
\[|v^{1}(R\nu_1,0)-\varphi(F,g,\nu_1)|\leq C\left(({N}/{R})^\beta+w_{\nu_1}^\beta(N)\right).\]
Lemmas 2.2, 3.4 in \cite{wf} show that if $|\nu_1-\nu_0|$ is small enough, $w_{\nu_1}(N)\leq 2 w_{\nu_0}(N)$. Thus we find 
\[|\varphi(F,g,\nu_0)-\varphi(F,g,\nu_1)|\leq C\left(R^{\frac{3}{2}\beta}|\nu_0-\nu_1|^\beta+R^{-\frac{1}{2}\beta}+w_{\nu_0}^\beta(R^\frac{1}{2})\right)\]
holds when $|\nu_0-\nu_1|,\frac{1}{R}$ are small enough. The second claim follows if we further take $R=R(\delta,\nu_0)$ to be large.
\end{proof}

In the following lemma, we prove the stability of cell problems on the operator and the boundary data.

\begin{lemma}\label{lcont2}
Let $\nu\in\mathbb{S}^{d-1}$. Assume $\{F^n(M,x,t),g^n(x,t), n\geq 0\}$ satisfy (G) and $F^n\rightarrow F^0$, $g^n\rightarrow g^0$ locally uniformly as $n\to\infty$. Suppose for each $n\geq 0$, $v^n$ is the unique bounded solution to
\begin{equation}\label{stability n}
\left\{\begin{aligned}&
\frac{\partial}{\partial t}v^n(x,t)- F^n(D^2v^n,x,t)=0 & \text{ in }&\quad   P_{\nu},\\
&v^n(x,t)=g^n(x,t)& \text{ on } &\quad  \partial P_{\nu}.
\end{aligned}
\right.
\end{equation}
Then $v^n\rightarrow v^0$ locally uniformly. 
\end{lemma}
\begin{proof}
This lemma is the parabolic generalization of Lemma 3.3 (i)(ii)(v) in \cite{wf}. To show the locally uniformly convergence, we take both upper and lower half-relaxed limits of $v^n$ which are denoted as $v^*,v_*$ respectively. Then $v^*\geq v_*$ and $v^*=v_*=g^0$ on the boundary. By Lemma \ref{lem stability}, $v^*,v_*$ are respectively sub and supersolution to \eqref{stability n} with $n=0$. Therefore $v^*\leq v_*$ by comparison which, combining with the definition, leads to $v^*=v_*=v^0$. The last equality holds because of the uniqueness of bounded solutions of \eqref{stability n}. Thus we proved the convergence.
\end{proof}

Now we are ready to prove the main theorem of this section.

\begin{proof}(of Theorem \ref{tcont}.)
First let $(x_0,t_0)\in\Gamma_1$. Take any sequence $\Gamma_T\ni(x_n,t_n)\rightarrow(x_0,t_0)$ as $n\to\infty$. Because by definition $\Gamma_2$ is open, we can assume $(x_n,t_n)\in\Gamma_1$. For simplicity denote $F^n=F^{x_n,t_n},g^n=g^{x_n,t_n}$ and we know $F^n\rightarrow F^0,g^n\rightarrow g^0$ locally uniformly.
By Lemma \ref{lcont1} and Lemma \ref{lcont2},
\begin{align*}
&    \lim_{n\rightarrow \infty}|\bar{g}(x_n,t_n)-\bar{g}(x_0,t_0)|\\
=&    \lim_{n\rightarrow \infty}|\varphi(F^n,g^n,\nu_n)-\varphi(F^0,g^0,\nu_0)|\\
\leq&
\lim_{n\rightarrow \infty}|\varphi(F^n,g^n,\nu_n)-\varphi(F^n,g^n,\nu_0)|+\lim_{n\rightarrow \infty}|\varphi(F^n,g^n,\nu_0)-\varphi(F^0,g^0,\nu_0)|\\
=\,&0,
\end{align*}
which provides the continuity of $\bar{g}$ on $\Gamma_1$.

Next fix $(x_0,t_0)\in\Gamma_2$. By definition, the spatial normal vector $\nu$ is constant in a small neighbourhood. Take $(x_n,t_n)\in\Gamma_2$ inside the neighbourhood and suppose $(x_n,t_n)$ converges to $ (x_0,t_0)$. Write the boundary speed at $t_n$ as $c_n$ and then $c_n\rightarrow c_0\ne 0$. And we can assume that $c_n,c_0$ are of the same sign. Suppose for each $n$, $v^{n}_s$ solves (R1) at point $(x_n,t_n)$ with $z=\tau=0$. Let $\chi^{n}(c_n s)$ be the boundary layer limit of $v^{n}_s$. By Lemma \ref{bd lyer lmt R1}, $\{\chi^n(\cdot)\}_{n\geq 0}$ are periodic with periodicity $\frac{1}{|\hat{\nu}|}$. Then Lemma \ref{lcont2} implies that for all $t$,
\[
\chi^{n}(c_0t)\rightarrow \chi^{0}(c_0t).
\]

Let $\bar{F}^n$ be the homogenized operator of $F^n$ for all $n\geq 0$. By Proposition 3.2 \cite{intho} and locally uniform convergence of $F^n$, we deduce that $ \bar{F}^n\to \bar{F}^0$ locally uniformly.
Now let $w^{n}$ be the solution to (R2) with boundary data $\chi^n(c_0 \,\cdot)$. From above, both the operators and the boundary data converge. So the solutions $\{w^{n}\}$ converge locally uniformly and the limit equals $w^0$ which is the unique solution to (R2) with operator $\bar{F}^0$ and boundary data $\chi^0(c_0\,\cdot)$. 

According to Lemma \ref{R1R2}, since $c_n,c_0$ are of the same sign, the boundary layer limit of $w^n$ equals $\psi^n$. Furthermore, for some universal $C$ and a constant $N(\nu)$ we have for each $n$,
\[|w^n(R\nu+x,t)-\psi^n|\leq C\left({N}/{R}\right)^\beta\]
where $\psi^n=\psi^n(F^n,g^n,\nu,c_n)$ is the boundary layer limit of $w^n$.
Since the convergence is uniform in $n$ and $w^n\to w^0$ locally uniformly, we have
\[\lim_{n\rightarrow \infty}|\bar{g}(x_n,t_n)-\bar{g}(x_0,t_0)|=\lim_{n\rightarrow \infty}|\psi^n-\psi^0|=0.\]
We proved that $\bar{g}$ is continuous on $\Gamma_2$.
\end{proof}

\subsection{Continuity Extension for Special Operators}\label{CELO}

In this section we study the continuity extension of the homogenized boundary data which corresponds to the study of the continuous dependence of the boundary layer limits on operators and boundary data, and the comparison of the two types of boundary layer tail problems. Because of the failure of the continuity extension in general, we will put more assumptions on operators.

Following from the idea of \cite{kim,Kimcont}, the boundary layer limit $\varphi(\nu)=\varphi(F^0,g^0,\nu)$ of problem (I) has a directional limit as $\nu$ approaches one rational direction $\xi$ and the limit can be characterized by a second boundary layer problem. We state the theorem below. 

\begin{theorem}\label{5.1}
For any fixed rational unit vector $\xi$ and a unit vector $\eta$ perpendicular to $\xi$. Let $\zeta:[0,1)\rightarrow \mathbb{S}^{d-1}$ be a geodesic path with unit
speed and $\zeta(0)=\xi,\dot{\zeta}(0)=\eta$. Recall equation (I) with irrational normal $\zeta(s)$, let $\varphi(F^0,g^0,{\zeta(s)})$ be the boundary layer limits. Let $M_\zeta(s)$ be the boundary layer limit of the following problem (which is (R1) with $c=1$)
\begin{equation} \label{limit0}
\left\{
\begin{aligned}&
{\partial_t}v_{s}(x,t)- F^0(D^2v_{s},x+{s \xi},t)=0 &  \text{ in }& \quad  P_\xi,\\
&v_{s}(x)=g^0(x+ s\xi,t ) &  \text{ on }&\quad   \partial P_\xi.
\end{aligned}
\right.
\end{equation}
Let $L_\xi=L_\xi(F^0,g^0,\eta)$ be the boundary layer limit of  
\begin{equation} \label{limit}
\left\{
 \begin{aligned}
&{\partial_t}w_{\xi,\eta}-\bar{F}^0(D^2w_{\xi,\eta})=0  & \text{ in }&\quad  P_\xi,\\
&w_{\xi,\eta}(x,t)=M_\xi(x\cdot\eta)  & \text{ on } &\quad  \partial P_\xi.
 \end{aligned}
 \right.
\end{equation}
Then $L_\xi(F^0,g^0,\eta)=\lim_{s\to 0}\varphi(F^0,g^0,\zeta(s))$.

\end{theorem}

For the proof, we refer readers \cite{kim,Kimcont,feldman2018continuity}.
The theorem is significant since it gives a characterization of the directional limit of $\varphi(\nu)$ at rational direction. With this, we give our main theorem of this section.

\begin{theorem} \label{homoF}
(Continuity extension) 
\begin{enumerate}
    \item [(i.) ] Let $(x_0,t_0)\in \overline{\Gamma}_1$ and write the corresponding rational normal vector as $\xi$. Suppose $\bar{F}^{x_0,t_0}$ is rotation/reflection invariant. Then $L_\xi(F^{x_0,t_0},g^{x_0,t_0},\eta)$ is independent of $\eta$ and for any $(x_n,t_n)\in\Gamma_1$ such that $(x_n,t_n)\to (x_0,t_0)$ as $n\to\infty$, we have $\bar{g}(x_n,t_n)\to L_\xi(F^{x_0,t_0},g^{x_0,t_0},\eta)$.
    
    \item [(ii.) ] Let $(x_0,t_0)\in \overline{\Gamma}_1\cup \overline{\Gamma}_2$ and write the corresponding rational normal as $\xi$. Suppose $\bar{F}^{x_0,t_0}$ is linear. Then $L_\xi(F^{x_0,t_0},g^{x_0,t_0},\eta)$ is independent of $\eta$ and for any $(x_n,t_n)\in\Gamma_1\cup\Gamma_2$ such that $(x_n,t_n)\to (x_0,t_0)$ as $n\to\infty$, we have $\bar{g}(x_n,t_n)\to L_\xi(F^{x_0,t_0},g^{x_0,t_0},\eta)$.
\end{enumerate}

\end{theorem}
\begin{proof}

First we show that $L_\xi(F^{x_0,t_0},g^{x_0,t_0},\eta)$ is independent of $\eta$. Because $\bar{F}^0=\bar{F}^{x_0,t_0}$ is rotation/reflection invariant, the equation in \eqref{limit} is preserved by unitary transformations on $x$. Therefore for any $\eta_1\perp \xi,\eta_2\perp \xi$, by the uniqueness of \eqref{limit},
\[w_{\xi,\eta_1}(x,t)=w_{\xi,\eta_2}(Qx,t)\]
where $Q$ is an orthogonal matrix satisfying $Q\xi=\xi$ and $Q\eta_1=\eta_2$. By Lemma \ref{R1R2} the boundary layer limits of $w_{\xi,\eta_1}$ and $w_{\xi,\eta_2}$ are the same. Thus $L_\xi(F^{0},g^{0}):=L_\xi(F^{x_0,t_0},g^{x_0,t_0},\eta)$ is independent of $\eta$.

Next by definition, since for $n\geq 1$ the normal vector $\nu_n$ at $(x_n,t_n)$ is irrational, \[\bar{g}(x_n,t_n)=\varphi(F^{x_n,t_n},g^{x_n,t_n},{\nu_n}).\] By Lemma \ref{lem stability}, to show the existence of $\lim_{n\to\infty}\bar{g}(x_n,t_n)$ and identify the limit, we only need to show
\[\lim_{n\to\infty}\varphi(F^{x_0,t_0},g^{x_0,t_0},{\nu_n})=L_\xi(F^{0},g^{0})\]
which is certainly true by Theorem \ref{5.1}. We proved (i).

\smallskip

Write $\xi=\frac{\hat{\xi}}{|\hat{\xi}|}$ where $\hat{\xi}\in\mathbb{Z}^d$ is irreducible.
Notice that \eqref{limit0} is the same as (R1) with $c=1,z=0,\tau=0,\nu=\xi$. By Lemma \ref{bd lyer lmt R1}, $M_\xi(s)=\chi_{s}=\chi(s)(c=1)$ is periodic in $s$ with periodicity $|\hat{\xi}|^{-1}$.
When $\bar{F}^0$ is linear, by Lemma \ref{concave} we get for any $\eta$,
\[
L_\xi(F^0,g^0,\eta)=|\hat{\xi}|\int_0^{1/|\hat{\xi}|} M_\xi(t)dt.
\]
The right-hand side is independent of $\eta$ and so is $L_\xi(F^0,g^0,\eta)=:L_\xi(F^0,g^0)$.

If $(x_n,t_n)\in \Gamma_1$, as before by Theorem \ref{5.1} and stability of boundary layer limit,
$\bar{g}(x_n,t_n)\to L_\xi(F^0,g^0)$ as $n\to \infty$. If $(x_n,t_n)\in \Gamma_2$, let $c_0\neq 0$ to be the speed of the boundary at $x_0,t_0$. After comparing (R2) and \eqref{limit}, we find $M_\xi(\cdot)=\chi_\xi(\cdot)$. Thus by Lemma \ref{concave},
\begin{align*}
    \psi(F^0,g^0,\nu_0,c_0)&=|c_0\hat{\xi}|\int_0^{1/|c_0\hat{\xi}|} \chi_\xi(c_0t)dt\\
    &=|\hat{\xi}|\int_0^{1/|\hat{\xi}|} \chi_\xi(t)dt\\
    &=|\hat{\xi}|\int_0^{1/|\hat{\xi}|} M_\xi(t)dt=L_\xi(F^0,g^0).
\end{align*}
Finally by stability of the boundary layer limits, we have
$\bar{g}(x_n,t_n)\to L_\xi(F^0,g^0)$ as $n\to \infty$.

\end{proof}

\begin{remark}
\label{rmrk linear}

Theorem \ref{homoF} illustrates that in the case the homogenized operator $\bar{F}$ is linear, $\bar{g}$ can be extended continuously to $\overline{\Gamma}_T$: the entire lateral boundary except non-moving flat boundary parts of rational normal. This observation will lead to Theorem \ref{linearversion}.

When $F(M,x,y,t,s)$ is linear in $M$, in view of the proof of Lemma \ref{homobarF}, $\bar{F}(M,x,t)$ is linear in $M$. 

\end{remark}

\subsection{Discontinuity of Homogenized Boundary Data}\label{discont}

The discontinuity of the boundary layer tail is discussed in \cite{Kimcont} for elliptic problems.
In this section, we give two examples showing the failure of continuous extension of $\bar{g}$ if the parabolic operator $F$ is not linear.


\begin{example}\label{prop4.6}
Recall the definition of $\bar{g}$ in Theorem \ref{tcont}. There exist $F,g$ satisfying (F1)--(F4), such that $\bar{g}$ cannot be continuously extended to $\overline{\Gamma}_1\cap \overline{\Gamma}_2$. The same result holds even if $F$ is rotation/reflection invariant.
\end{example}

\begin{proof}

The first example is given with non-rotation/reflection invariant operator. Because only local information of the original problem is needed to identify the homogenized boundary data, without loss of generality we can consider problems in unbounded domains. Consider the following continuous space-time region in $\mathbb{R}^3$,
\[\Omega(t):=\{(x,y)\,|\,y>t \text{ for }t<0\}\cup\{(x,t)\,|\,y-\tan(t)x>t \text{ for }t\geq 0\}.\]
Write $\Omega=\bigcup_{t\in (-1,1)}(\Omega(t)\times\{t\})$.

Denote two spacial vectors $(1,0),(0,1)$ as $e_1,e_2$ respectively.
Consider a set of spatial {orthogonal bases} which are close to $e_1,e_2$: for $\delta\in(0,1)$, \[\mathcal{B}_{\delta}=\{(\eta,\nu)\,|\,\eta=\cos(\tau)e_1+\sin(\tau)e_2, \nu=-\sin(\tau)e_1+\cos(\tau) e_2, -\delta<\tau<\delta\}.\]

Let $u^\eps$ be a solution to
\begin{equation}\label{example}
\left\{
\begin{aligned}
&\frac{\partial u^\eps}{\partial t}-\sup_{(\eta,\nu)\in \mathcal{B}_{\delta}}\{\Delta u^\eps,4u^\eps_{\eta\eta}+u^\eps_{\nu\nu}\}=0  &\text{ in }&\quad  \Omega\\
&u^\eps (x,y,t)=\sin(\frac{y}{\eps}) &
\text{ on }&\quad  \partial_l \Omega.
\end{aligned}
\quad \right.
\end{equation}
It is straightforward that
\begin{equation*}
    \begin{aligned}
& u^\eps(x,y,t)=\sin(\frac{t}{\eps})\text{ on }y=t, &\text{ for }t<0,\\
& u^\eps(x,y,t)=\sin(\frac{y}{\eps}) \text{ on } y-\tan(t)x=t, & \text{ for }t\geq 0. 
\end{aligned}
\end{equation*}

First let us compute the homogenized boundary data for $t<0$. Fix any $(x,y,t)=(x_0,t_0,t_0)$ with $t_0<0$. At this point, the inner normal vector is $e_2$ and the boundary speed is $1$. Then for the first cell problem (R1) (with parameter $z=\tau=0,s$), we use the operator $F$ in \eqref{example}, boundary data $\sin(s)$ and the domain is $P_{e_2}$. 
By uniqueness, the solution to (R1) is just a constant: $v_s=\sin s$ which shows that the boundary layer limit
\[\chi_{e_2,s}=\chi(F,\sin(\cdot),e_2,1)=\sin s.\] 
Since $F$ only depends on $D^2u$, we get  $\bar{F}=F$. Then the corresponding second boundary layer problem (R2) becomes
\begin{equation}\label{R2''}
\left\{
\begin{aligned}
&{\partial_t w}-\sup_{(\eta,\nu)\in \mathcal{B}_{\delta}}\{\Delta w,4w_{\eta\eta}+w_{\nu\nu}\}=0  &\text{ in }&\quad  P_{e_2}\\
&w (x,y,t)=\sin(t) &
\text{ on }&\quad  \partial P_{e_2}.
\end{aligned}
\quad \right.
\end{equation}
By uniqueness again, the solution to \eqref{R2''} is constant in $x$-direction. So we only need to solve for the following equation in $\mathbb{R}\times\mathbb{R}^+$:
\begin{equation*}
\left\{\begin{aligned}&
{\partial_t \tilde{w}}-\sup_{|\theta-1|\leq \delta'}\{\theta \tilde{w}_{yy}\}=0  &\text{ in }&\quad \{y>0\}\\
&\tilde{w}(y,t)=\sin t  &\text{ on } &\quad \{y=0\}
\end{aligned}
\right.
\end{equation*}
and we have $w(x,y,t)=\tilde{w}(y,t)$.
Here $\delta'$ depends on $\delta$ which can be arbitrarily small if $\delta$ is small.
Notice that if $\delta'$ is $0$, the operator is linear. For linear equation, Lemma \ref{concave} implies that the boundary layer limit $\psi(\delta=0)=\lim_{R\to\infty}\tilde{w}(R,0)$ is the average of $\sin t$ which is $0$.
Then by the stability of viscosity solutions, we find that $\bar{g}(x_0,y_0,t_0)$ can be arbitrarily close to $0$ which is uniform in $(x_0,y_0,t_0)$ if $\delta$ is small enough.  

Next we compute the boundary data for $(x,y,t)=(0,t_0,t_0)$ with $t_0>0$ small enough. Denote
\[z_1=\cos( t_0) x+\sin(t_0)y, \quad z_2=-\sin(t_0)x+\cos(t_0)y.
\]
So 
$x=\cos(t_0)z_1-\sin(t_0)z_2$, $ y=\sin(t_0)z_1+\cos(t_0)z_2.$
We change the coordinate from $(x,y)$ to $(z_1,z_2)$. If $(\cos t_0,\sin t_0)$ is irrational, we need to use cell problem (I) which is
\begin{equation}\label{homoeta}
\left\{\begin{aligned}&
\partial_t v-\sup_{(\eta,\nu)\in \mathcal{B}_{\delta}}\{\Delta v,4v_{\eta\eta}+v_{\nu\nu}\}=0  &\text{ in }&\quad \{z_2>0\},\\
&v (z_1,z_2,t)=\sin\left(\sin(t_0)z_1\right) &
\text{ on }&\quad \{z_2=0\}.
\end{aligned}
\right.
\end{equation}
If $t_0$ is small enough,
$
z_1,z_2$ directions are belong to $ \mathcal{B}_{\delta}$. Then it is not hard to check that
\[v_1=e^{-\sin(t_0)2z_2}\sin\left(\sin(t_0)z_1\right),\quad v_2=e^{-\sin(t_0)z_2}\sin\left(\sin(t_0)z_1\right)\]
are two subsolutions to \eqref{homoeta}. Therefore by comparison, 
\[v(z_1,z_2,t)\geq \max\{v_1(z_1,z_2,t),v_2(z_1,z_2,t)\}=:\kappa(z_1,z_2).\] 
Direct computation shows that on the hyperplane $\{(z_1,z_2)\,|\,z_2=(\sin(t_0))^{-1}\}$, we have
\[\kappa=e^{-2}[\sin\left(\sin(t_0)z_1\right)]_-+e^{-1}[\sin\left(\sin(t_0)z_1\right)]_+\]
Also since the operator $F$ is convex, by Lemma \ref{concave}, we have
\[\bar{g}(0,t_0,t_0)=\lim_{R\rightarrow \infty}v(0,R,0)\geq \frac{\sin(t_0)}{{2\pi}}\int_0^\frac{2\pi}{\sin(t_0)}\kappa(z_1)dz_1=:\alpha>0\]
which is an universal constant independent of $t_0,\delta$.

Now we fix to $\delta$ small enough such that $\bar{g}(x,y,t)<\frac{c}{2}$ for all $t<0$. Then take $\alpha_0>0$ small enough such that for all $0<t<\alpha_0$, we have $\bar{g}(0,t,t)\geq \alpha$. This shows that $\bar{g}(x,y,t)$ is not continuous at point $(0,0,0)\in \overline{\Gamma}_1\cap \overline{\Gamma}_2$.

\smallskip

Next we construct an example with rotation/reflection invariant operator and we will show that the continuous extension of $\bar{g}$ on $\overline{\Gamma}_T$ still fails. For simplicity, we only work on the cell problems and we are going to show that the boundary layer limit $\psi_\xi$ from (R1)(R2) can be different from $L_\xi$ from \eqref{limit0} and \eqref{limit}.

Let \[P_\xi:=\{(x,y,t)\in\mathbb{R}\times \mathbb{R}^+\times\mathbb{R}\}\] and
$\xi$ is to positive $y$-direction. Let $F(M)=\max\left\{tr M, \Lambda tr M\right\}$ with $\Lambda>1$ which is a rotation/reflection invariant operator. We select $g(x,y)=\sin y$.
Since $c=1$, (R1) and \eqref{limit0} on $P_\xi$ coincide and notice that the solutions are just constants. We get
\[\chi(F,g,\xi,0)(s)=M_\xi(s)=\sin s.\]
Let $\eta$ in \eqref{limit} be $x$-direction and then the equation becomes
\begin{equation*}
\left\{\begin{aligned}&
{\partial_t}w_{\xi,\eta}-\max\left\{\partial_x^2{w_{\xi,\eta}}+\partial_y^2{w_{\xi,\eta}},\Lambda(\partial_x^2{w_{\xi,\eta}}+\partial_y^2{w_{\xi,\eta}})\right\}=0   & \text{ for } &\quad  y> 0,\\
&{w_{\xi,\eta}}(x,y,t)=\sin x  & \text{ for }&\quad  y=0.
\end{aligned}
\right.
\end{equation*}
By uniqueness of the equation, $w(x,y,t)=\sin(x)e^{-y}$ is the bounded solution which tells that the boundary layer limit \[L_\xi(F,\sin(\cdot))=\lim_{y\rightarrow\infty}\sin(x)e^{-y}=0.\]

As for (R2), we have
\begin{equation*}
\left\{\begin{aligned}&
\frac{\partial}{\partial t}w- \max\left\{w_{xx}+w_{yy},\Lambda(w_{xx}+w_{yy})\right\}=0  & \text{ for }&\quad  y> 0,\\
&w(x,y,t)=\sin t &\text{ for }&\quad  y=0.
\end{aligned}
\right.
\end{equation*}
Notice that 
\[\sin(t-\frac{1}{\sqrt{2\Lambda}}y)e^{-\frac{1}{\sqrt{2\Lambda}}y},\quad \sin(t-\frac{1}{\sqrt{2}}y)e^{-\frac{1}{{2}}y}\]
are two subsolutions with the same boundary data $\sin t$. Therefore by comparison, 
\[w(x,y,t)=w(y,t)\geq \max\left\{\sin(t-\frac{1}{\sqrt{2\Lambda}}y)e^{-\frac{1}{\sqrt{2\Lambda}}y}, \sin(t-\frac{1}{\sqrt{2}}y)e^{-\frac{1}{\sqrt{2}}y}\right\}.\]
We denote the right-hand side in the above by $\kappa'(y,t)$. Since the operator is convex, by Lemma \ref{concave},
\[\bar{g}=\psi(F,f,\xi,1)\geq \frac{1}{2\pi}\int_0^{2\pi} \kappa'(1,t)dt>0=L_\xi(F,f)\]
which finishes the proof.

\end{proof}

\section{Homogenization on the Lateral Boundary}\label{sechomo}

In this section we go back to the original problem \eqref{eqns1} and prove the homogenization on $\Gamma_T$.

\begin{theorem}\label{homol}
Suppose conditions (F1)--(F4)(O) are satisfied. Let $(x_0,t_0)\in\Gamma_T$ and take any $(x_\eps,t_\eps)\in\overline{\Omega}_T$ such that $(x_\eps,t_\eps)\rightarrow (x_0,t_0)$ as $\eps\to 0$. Let $\bar{g}$ be the function on $\Gamma_T$ given in \eqref{def bar g}. If $(x_0,t_0)\in \Gamma_1$, then for any $\delta>0$, there exists $R_0$ such that for all $R>R_0$ we have 
\begin{equation}
    \label{r bndry cnvrg}
    \limsup_{\eps\rightarrow 0}|u^\eps(x_\eps+\eps R\nu,t_\eps)-\bar{g}(x_0,t_0)|\leq \delta.
\end{equation}
If $(x_0,t_0)\in \Gamma_2$, for any $\delta>0$ there exists $R_0'$ such that for all $R>R_0'$ we have
\begin{equation}
    \label{irr bndry cnvrg}
    \limsup_{\eps\rightarrow 0}|u^\eps(x_\eps+\eps^{\frac{1}{2}} R\nu,t_\eps)-\bar{g}(x_0,t_0)|\leq \delta.
\end{equation}
\end{theorem}

The key to proving the theorem is a local uniform convergence result (see \eqref{e0r0}, \eqref{claimunif}).
Let us start with the case when $\{(x_\eps,t_\eps)\}$ are on the boundary.

\begin{lemma}\label{contu}
Let $u^\eps$ be the solution to equation \eqref{eqns1} and take any sequence $(x_\eps, t_\eps)\in \partial_l\Omega_T$ such that $(x_\eps, t_\eps)\to (x_0,t_0)\in \Gamma_T$. Suppose along a sequence of $\eps \rightarrow 0$, $\eps^{-1}{x_\eps}\rightarrow z, \eps^{-2}{t_\eps}\rightarrow \tau $ in $\mathbb{R}^d/\mathbb{Z}^d$ and $\mathbb{R}/\mathbb{Z}$ respectively. Then if letting $v^\eps(x,t):=u^\eps(x_\eps+\eps x,t_\eps+\eps^2 t)$, $v^\eps(x,t)$ converges locally uniformly, along the sequence of $\eps\to 0$, to $v(x,t)$ which is the unique solution to
\begin{equation}\label{I}
\left\{\begin{aligned}&
\frac{\partial}{\partial t}v(x,t)- F^0(D^2v,x+z,t+\tau)=0 &  \text{ in }&\quad   P_{\nu},\\
&v(x,t)=g^0(x+z,t+\tau)&  \text{ on }&\quad   \partial P_{\nu}.
\end{aligned}
\right.
\end{equation}
\end{lemma}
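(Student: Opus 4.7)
The plan is to combine uniform a priori regularity with the stability of viscosity solutions under half-relaxed limits, and then conclude via the comparison principle in the half-space. The universal bound $\|v^\epsilon\|_\infty \leq \|g\|_\infty$ is immediate. Condition (F4), applied with the substitution $\delta := \epsilon^2 \to 0$, gives that $\epsilon^2 F(\epsilon^{-2}M, x_\epsilon+\epsilon x, y, t_\epsilon+\epsilon^2 t, s)$ converges locally uniformly in all variables to $F^{x_0,t_0}(M,y,s)=F^0(M,y,s)$. Along the chosen subsequence, the fast variables in (p1) satisfy $\epsilon^{-1}(x_\epsilon+\epsilon x)=\epsilon^{-1}x_\epsilon+x\to z+x$ and $\epsilon^{-2}(t_\epsilon+\epsilon^2 t)=\epsilon^{-2}t_\epsilon+t\to \tau+t$ in $\mathbb{R}^d/\mathbb{Z}^d\times\mathbb{R}/\mathbb{Z}$, locally uniformly in $(x,t)$, and by $\alpha$-H\"older continuity of $g$ the boundary data in (p1) converges locally uniformly to $g^0(x+z, t+\tau)$.

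Next, introduce the half-relaxed limits $v^*:=\limsup^* v^\epsilon$ and $v_*:=\liminf_* v^\epsilon$ taken over the subsequence and over $\Omega^\epsilon\ni(x',t')\to(x,t)$. The standard Barles--Perthame stability result applied to equation (p1) shows that $v^*$ is a viscosity subsolution and $v_*$ a supersolution of $\partial_t v-F^0(D^2v,x+z,t+\tau)=0$ in $P_\nu$. To pin down the boundary values, invoke Theorem \ref{cont} and Theorem \ref{LW}: assumption (O) gives a uniform $C^2$ description of $\partial_l\Omega^\epsilon$ in the rescaled coordinates, and the rescaled boundary data is $\alpha$-H\"older uniformly in $\epsilon$. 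This yields a uniform H\"older modulus for $\{v^\epsilon\}$ on compact subsets of $\overline{P_\nu}$, which together with the Hausdorff convergence $\partial\Omega^\epsilon\to \partial P_\nu$ of rate $O(\epsilon R^2)$ on $S_R$, forces $v^*=v_*=g^0(\,\cdot+z,\,\cdot+\tau)$ on $\partial P_\nu$.

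At this stage, $v^*$ and $v_*$ are both bounded, are respectively sub- and supersolution of the cell problem \eqref{I}, and agree on $\partial P_\nu$. Applying Corollary \ref{cols}, one obtains $v^*\leq v_*$ throughout $P_\nu$; since the reverse inequality is automatic, $v^*=v_*=v$, the unique bounded viscosity solution of \eqref{I} provided by Perron's method. The equality of upper and lower half-relaxed limits upgrades pointwise convergence to local uniform convergence of $v^\epsilon$ to $v$ on $\overline{P_\nu}$ along the chosen subsequence.

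The main obstacle is the boundary matching: since $\Omega^\epsilon$ only Hausdorff-converges to $P_\nu$, $v^\epsilon$ is not literally defined on $\partial P_\nu$, so one cannot read off the limiting boundary values directly. Overcoming this requires boundary H\"older estimates that are uniform in $\epsilon$, which rely on the uniform Lipschitz (indeed $C^2$) description of $\partial\Omega^\epsilon$ coming from (O) and on the uniform $\alpha$-H\"older control of the rescaled boundary data. Once these uniform estimates are in place, the passage to the limit for both the interior equation and the boundary condition is a routine application of viscosity-solution stability.
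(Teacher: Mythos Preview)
Your proof is correct and follows essentially the same approach as the paper: take half-relaxed limits $v^*,v_*$, use stability of viscosity solutions to obtain sub/supersolutions of \eqref{I}, verify $v^*=v_*=g^0(\cdot+z,\cdot+\tau)$ on $\partial P_\nu$ via uniform-in-$\epsilon$ H\"older continuity up to the boundary (the paper invokes this more tersely, while you cite Theorems \ref{LW} and \ref{cont} explicitly), and then apply Corollary \ref{cols} to conclude. Your final paragraph correctly identifies the only delicate point---that $v^\epsilon$ lives on $\Omega^\epsilon$ rather than $P_\nu$---and handles it exactly as the paper does.
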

\begin{proof}
Without loss of generality, assume $z=\tau=0$ and we write $\eps\to 0$ meaning along the sequence of $\eps \to 0$. By \eqref{eqns1}, $v^\eps$ satisfies
\[
\left\{
 \begin{aligned}
&\frac{\partial}{\partial t}v^\eps(x,t)-\eps^2 F(\eps^{-2}D^2v^\eps,x_\eps+{\eps}{x},\eps^{-1}x_\eps+x,t_\eps+\eps^2 t,\eps^{-2}t_\eps+t)=0    &\text{in }&  \Omega^\eps,\\
&v^\eps(x,t)=g(x_\eps+\eps{x},\eps^{-1}x_\eps+x,t_\eps+\eps^2 t,\eps^{-2}t_\eps+t)   &\text{on }&  \partial_l\Omega^\eps .
 \end{aligned}
 \right.
\]
The domains converge locally uniformly to $P_\nu$ in Hausdorff distance as $\eps\to 0$.
The operators 
\[\eps^2 F(\eps^{-2}M,x_\eps+{\eps}{x},\eps^{-1}x_\eps+x,t_\eps+\eps^2 t,\eps^{-2}t_\eps+t)\] 
converge locally uniformly to $F^0(M,x,t)$ by condition (F1)(F4).

Let $v^*,v_*$ be respectively the upper and lower half-relaxed limits of $v^\eps$ which are then functions defined in $P_{\nu}$. By stability of viscosity solutions (Lemma 2.4 in \cite{wf}), $v^*$ and $v_*$ are respectively sub and supersolutions of equation \eqref{I}. If $v^*=v_*$ on $\partial P_\nu$, by Corollary \ref{cols}, $v^*\leq v_*$. However since the inverse inequality holds by definition, we obtain $v^*=v_*$ in $P_\nu$ which gives the desired convergence result. Consequently we are left to show the convergence of $v^\eps$ on the boundary.

Fix any point $(y_0,s_0)\in\partial{P}_\nu$. Take any $(y_\eps,s_\eps)\rightarrow (y_0,s_0)$ such that $(x_\eps+\eps y_\eps,t_\eps+\eps^2 s_\eps)\in\overline{\Omega}_T$. It follows from H\"{o}lder continuity of both $v^\eps,g$, and the assumption $\eps^{-1}{x_\eps}\rightarrow 0$,  $\eps^{-2}{t_\eps}\rightarrow 0 $ that
\begin{align*}
    &\lim_{\eps\rightarrow 0}|v^\eps(y_\eps,s_\eps)-g^0(z+y_0,\tau+s_0)|\\
    =\,&\lim_{\eps\rightarrow 0}|v^\eps(y_\eps,s_\eps)-g(x_\eps+\eps y_\eps,\eps^{-1}x_\eps+y_\eps,t_\eps+\eps^2s_\eps,\eps^{-2}t_\eps+s_\eps)|\\
    =\,&0.
\end{align*}
This shows that $v^*=v_*$ on the boundary which finishes the proof. 

\end{proof}

Next we study the case when $(x_0,t_0)\in\Gamma_2(T)$.

\begin{proposition}\label{contpsi}
Let $u^\eps$ be the solution to equation \eqref{eqns1} and take any sequence $(x_\eps, t_\eps)\in \partial_l\Omega_T$ such that $(x_\eps, t_\eps)\to (x_0,t_0)\in \Gamma_2$. Suppose along a sequence of $\eps \rightarrow 0$, $\eps^{-1}{x_\eps}\rightarrow z, \eps^{-2}{t_\eps}\rightarrow \tau $ in $\mathbb{R}^d/\mathbb{Z}^d$ and $\mathbb{R}/\mathbb{Z}$ respectively. Write $c_\eps=c(t_\eps)$ as the speed of the boundary at time $t_\eps$ (since $(x_0,t_0)\in \Gamma_2$, $c$ is locally independent of $x$). Then if letting  
\[w^\eps(x,t):=u^\eps(x_\eps+\eps^\frac{1}{2} x+c_\eps\eps t\nu,t_\eps+\eps t),\]  $w^\eps(x,t)$ converges locally uniformly along the sequence of $\eps\to 0$ to $w^z(x,t)$ which is the unique solution to (R2).
\end{proposition}

\begin{proof}
Without loss of generality, assume $z=\tau=0, c(t_0)=1$. 
Let $w^*,w_*$ be the upper and lower half-relaxed limits of $w^\eps$. Since $w^\eps$ is a solution to \eqref{derive R2}, by Lemma \ref{homobarF} and stability of viscosity solutions, $w^*$ and $w_*$ are respectively sub and super solutions to
\[
\partial_t{w}-\bar{F}(D^2 w)=0 \text{ in }  P_{\nu}.\]
As before, we only need to show that $w^\eps(x,t)$ converges to $\chi_\nu(t)$ on the boundary where $\chi_\nu(t)=\chi^0_\nu(t)$ is given in Lemma \ref{bd lyer lmt R1}.
The fact that $\nu$ is locally a constant vector will be essential.

Fix any point $(y_0,s_0)\in\partial{P}_\nu$. First we take $(y_\eps,s_\eps)\rightarrow (y_0,s_0)$ such that 
\begin{equation}
    \label{bd point}
    (x_\eps+\eps^\frac{1}{2}y_\eps+c_\eps\eps t\nu,t_\eps+\eps s_\eps)\in\partial_l{\Omega_T}.
\end{equation}
Let $\mu^\eps(x,t)=w^\eps(y_\eps+\eps^\frac{1}{2} x,s_\eps+\eps t)$. For abbreviation of notations, we write
$
\hat{x}_\eps=x_\eps+\eps^{\frac{1}{2}}y_\eps,\hat{t}_\eps=t_\eps+\eps s_\eps.$
Then $\mu^\eps$ satisfies
\begin{align*}
&\quad\quad\partial_t \mu^\eps-\eps  c_\eps\nu\cdot D\mu^\eps\\
&-\eps^2F(\eps^{-2}D^2\mu^\eps,\hat{x}_\eps+\eps x+c_\eps( \eps s_\eps+\eps^2t)\nu,
\eps^{-1}\hat{x}_\eps+ x+c_\eps( s_\eps+\eps t)\nu,\hat{t}_\eps+\eps^{2}t,\eps^{-2}\hat{t}_\eps+t)=0.
\end{align*}
The domain converges to a half plane in Hausdorff distance and for $(x,t)$ on the boundary and near the origin 
\begin{equation}\label{webdry}
\mu^\eps(x,t)=g(\hat{x}_\eps+\eps x+c_\eps( \eps s_\eps+\eps^2t)\nu,
\eps^{-1}\hat{x}_\eps+ x+c_\eps( s_\eps+\eps t)\nu,\hat{t}_\eps+\eps^{2}t,\eps^{-2}\hat{t}_\eps+t).    
\end{equation}
By passing to a subsequence again, we can assume
$\eps^{-1}\hat{x}_\eps\rightarrow z_0, \eps^{-2}\hat{\tau}_\eps\rightarrow \tau_0$. Therefore on $\partial P_\nu$, $\mu^\eps(x,t)$ converges locally uniformly to $g(x_0,z_0+x+s_0\nu,t_0,\tau_0+t)$. 
The key fact here is $z_0\cdot\nu=0$. Indeed by the fact that $\partial_l\Omega_T$ is $\mathcal{C}^1$ and is locally spatially flat near $x_0,y_0$, and by \eqref{bd point}, we have \[\eps^\frac{1}{2}|(y_\eps-y_0)\cdot\nu|=o(\eps|s_0|)\] and so \[\eps^{-1}\hat{x}_\eps\cdot\nu=\eps^{-1}x_\eps\cdot\nu+\eps^{-\frac{1}{2}} y_\eps\cdot\nu\to 0.\] We remark that the same estimate does not hold if $\nu$ varies in space direction. In that case we only have $\eps^\frac{1}{2}|y_\eps-y_0|\leq o(\eps^\frac{1}{2}|y_0|+\eps|s_0|)$. 

Then we send $\eps\to 0$ along the subsequence, and take the upper and lower half-relaxed limits of $w^\eps$, denoted as $\tilde{w}^*,\tilde{w}_*$ which are then respectively the sub and super solutions to
\begin{equation}\label{eqnwz0}
\left\{\begin{aligned}&
\frac{\partial w}{\partial{t}}-F^0(D^2w,z_0+x+ s_0\nu,\tau_0+t)=0 &\text{ in } &\quad  P_{\nu}\\
&w(x,t)=g^0(z_0+x+ s_0\nu,\tau_0+t)  &\text{ on } &\quad  \partial{P_{\nu}}.
\end{aligned}
\right.
\end{equation}
Due to \eqref{webdry}, $\tilde{w}^*=\tilde{w}_*$ on $\partial\mathcal{P}_\nu$. Then by comparison they are equal and we denote it by $w^{z_0}$ which solves \eqref{eqnwz0}. By Lemma \ref{bd lyer lmt R1}, the boundary layer limit $\lim_{R\rightarrow\infty}w^{z_0}(x+R\nu,t)$ exists and it only depends on $z_0\cdot\nu+s,\nu,F^0,g^0$. Since $z_0\cdot\nu=0$, we have
\[\lim_{R\rightarrow\infty}w^{z_0}(x+R\nu,t)=\chi_\nu(s_0).\]
Hence we proved that for any $\delta>0$, there exists $R_0$ such that for $R>R_0$
\[
    \limsup_{\eps\rightarrow 0}|w^\eps(y_\eps+\eps^\frac{1}{2}R\nu,s_\eps)-\chi_{\nu}(s_0)|=\limsup_{\eps\rightarrow 0}|w^\eps(R\nu,0)-\chi_{\nu}(s_0)|<\delta.
\]
Moreover the convergence is uniform in $(y_\eps,s_\eps)$. We can prove the following statement through a compactness argument.
\begin{lemma}\label{lem lem1}
{For any fixed $\delta>0$, there are positive constants $L_\delta,\eps_0$ and $R_0$ such that for all $(y,s)$ on the boundary of the domain of $w^\eps$, if $(y,s)\in S_{L_\delta}(y_0,s_0)$, we have for all $R\geq R_0$}
\begin{equation}\label{e0r0}
\sup_{\eps\leq\eps_0}|w^\eps(y+\eps^{\frac{1}{2}}R\nu,s)-\chi_\nu(s_0)|\leq \delta.
\end{equation}
\end{lemma}
\begin{proof}
Denote $\mu^{\eps}_{y,s}(x,t):=w^{\eps}(y+\eps^{\frac{1}{2}}x,s+\eps t)$. 
Suppose the claim fails, we can assume that there exists $\delta>0$ such that for some $R>0$ which can be arbitrarily large, there exist sequences $\eps_n\rightarrow 0$ and $(y_n,s_n)\rightarrow(y_0,s_0)$ (as $n\to\infty$) with $(y_n,s_n)$ is on the boundary of the domain of $w^{\eps_n}$, such that we have
\begin{equation*}
\limsup_{n\rightarrow \infty}|\mu^{\eps_n}_{y_n,s_n}(R\nu,0)-\chi_\nu(s_0)|> \delta.
\end{equation*}
Because $(y_n,s_n)$ is on the boundary, $\eps^{-\frac{1}{2}}y_n\cdot\nu=o(|s_n|)=o(1)$.
By Lemma \ref{contu}, by passing to a subsequence of $n\to\infty$, $\mu^{\eps_n}_{y_n,s_n}$ converges locally uniformly to $\mu^{z_0,\tau_0}$ which solves \eqref{I} with $z=z_0+s_0\nu,\tau=\tau_0$ or (R1) with $z=z_0,cs=s_0,\tau=\tau_0$. By Lemma \ref{bd lyer lmt R1}, the boundary layer limit $\lim_{R\to\infty}\mu^{z_0,\tau_0}(R\nu,0)\to \chi_\nu(s_0)$ as $R\to\infty$.
Therefore 
\begin{align*}
    &\limsup_{n\rightarrow \infty}|\mu^{\eps_n}_{y_n,s_n}(R\nu,0)-\chi_\nu(s_0)|\\
    \leq\,& \limsup_{n\rightarrow \infty}|\mu^{\eps_n}_{y_n,s_n}(R\nu,0)-\mu^{z_0,\tau_0}(R\nu,0)|+|\mu^{z_0,\tau_0}(R\nu,0)-\chi_\nu(s_0)|\\
    <\,& \delta
\end{align*}
if we take $R$ large enough, which leads to the contradiction.
\end{proof}

Now we turn to the proof of Proposition \ref{contpsi}.
More generally, for any $(y_\eps,s_\eps) \rightarrow (y_0,s_0) $ such that $(x_\eps+\eps^\frac{1}{2}y_\eps+c_\eps \eps t,t_\eps+\eps s_\eps)\in\overline{\Omega}_T$, write $y_\eps=y_\eps'+r_\eps$ so that $(y_\eps',s_\eps)$ is on the boundary and $r_\eps$ is to $\nu$ direction. Then obviously $r_\eps\rightarrow 0$. The goal is to show that for any $\delta\in(0,1)$ and $R$ large enough, we have
\begin{equation}\label{wantos}
\limsup_{\eps\rightarrow 0}|w^\eps(y_\eps+\eps^\frac{1}{2}R\nu,s_\eps)-\chi_{\nu}(s_0)|\leq 3\delta.
\end{equation}

Let $\Omega':=(\Omega^\eps+\eps^\frac{1}{2}R\nu)\cap S_{L_\delta}(y_0,s_0)$ where $S_{L_\delta}(y_0,s_0)$ is the parabolic cylindar and 
\begin{equation*}
\Omega^\eps:=\bigcup_t \left(\eps^{-\frac{1}{2}}(\Omega(t_\eps+\eps t)-x_\eps-c_\eps\eps t \nu)\times\{t\}\right).
\end{equation*} 
According to \eqref{e0r0}, if $L_\delta$ is small enough there exist $\eps_0,R_0$ such that for all $R\geq R_0$
\[\sup_{\eps\leq \eps_0}|w^{\eps}(y,s)-\chi_\nu(s_0)|<\delta\]
for all $(y,s)\in (\partial_l\Omega^\eps+\eps^\frac{1}{2}R\nu)\cap S_{L_\delta}(y_0,s_0)=:\partial_l\Omega'$.

Now we want to apply Corollary \ref{cmp coro} for $w^\eps-\chi_\nu(s_0)$. Since $w^\eps$ solves \eqref{derive R2} and so does $ w^\eps-\chi_\nu(s_0)$, the equation meets the condition of the corollary. By the scaling, $\Omega'$ satisfies the condition of the domain. By the corollary we obtain that there exists $r_\delta<L_\delta$ such that for all $\eps$ small enough and all $y,s\in \Omega^\eps\cap S_{r_\delta}(y_0,s_0)$ we have
\[|w^\eps(y+\eps^\frac{1}{2}R\nu,s)-\chi_{\nu}(s_0)|\leq 3\delta\]
which implies \eqref{wantos}. Then $w^\eps$ converges in the half-relaxed limit sense on the boundary which finishes the proof.
\end{proof}

With the help of the above Lemma \ref{contu} and Proposition \ref{contpsi}, we are going to show the main theorem in this section. 

\begin{proof}(of Theorem \ref{homol})
Let us only prove \eqref{irr bndry cnvrg}, while the proof for \eqref{r bndry cnvrg} is similar to the one of Lemma 5.2 \cite{wf} by applying Lemma \ref{contu}. As done in Lemma \ref{lem lem1}, first we show the following statement:
\textit{for any fixed $\delta>0$, we can find positive constants $L_\delta,\eps_0$ and $R_0$ such that for any $(x,t)\in \partial_l\Omega\cap S_{L_\delta}{(x_0,t_0)}$, there holds for $R\geq R_0$}
\begin{equation}\label{claimunif}\sup_{\eps\leq\eps_0}|u^{\eps}(x+\eps^\frac{1}{2}R\nu,t)-\bar{g}(x_0,t_0)|\leq \delta.
\end{equation}
The proof is similar to the one of Lemma \ref{lem lem1}.
Suppose the statement fails, there exists $\delta>0$ such that for any fixed $R$ which can be arbitrarily large, there are sequences $\eps_n\rightarrow 0$ and $(x_n,t_n)\in\partial_l\Omega$ satisfying $(x_n,t_n)\rightarrow (x_0,t_0)$ such that,
\begin{equation}\label{ineq1}
|u^{\eps_n}(x_n+\eps^\frac{1}{2}R\nu,t_n)-\bar{g}(x_0,t_0)|>\delta.
\end{equation}
Let
\[w^{\eps_n}_{n}(x,t)=u^\eps(x_n+\eps^{\frac{1}{2}}x+c(t_n)\nu \eps t, t_n+\eps t)\]
and then $|w^{\eps_n}_{n}(\eps^\frac{1}{2}R\nu,0)-\bar{g}(x_0,t_0)|>\delta.$
However by Proposition \ref{contpsi}, by passing to a subsequence of $n\to \infty$, $w^{\eps_n}_{n}$ converges locally uniformly to $w^z$ for some $z$ which solves (R2). By Lemma \ref{R1R2} the boundary layer limit of $w^z$ (independent of $z$) equals $\psi=\bar{g}(x_0,t_0)$. Therefore
\begin{align*}
    &\limsup_{n\to\infty}|w^{\eps_n}_{n}(\eps^\frac{1}{2}R\nu,0)-\bar{g}(x_0,t_0)|\\
    \leq\,& \limsup_{n\to\infty}|w^{\eps_n}_{n}(\eps^\frac{1}{2}R\nu,0)-w^z(\eps^\frac{1}{2}R\nu,0)|+|w^{z}(\eps^\frac{1}{2}R\nu,0)-\bar{g}(x_0,t_0)|<\delta
\end{align*}
for some $R(\delta)$ large enough which contradicts with \eqref{ineq1} and we proved the claim.

\smallskip

As done in Lemma \ref{lem lem1}, we are going to apply Corollary \ref{cmp coro}. We use
\[\tilde{w}^{\eps}(x,t)=u^\eps(x_0+\eps^{\frac{1}{2}}R\nu+\eps^{\frac{1}{2}}x+c_0\nu \eps t, t_0+\eps t)-\bar{g}(x_0,t_0)\]
and by the claim
$|\tilde{w}^\eps|\leq \delta$
for all $\eps\leq \eps_0$ on the boundary of its domain near the origin: $\partial_l\tilde{\Omega}^\eps\cap S_{\eps^{-\frac{1}{2}}L_\delta}$, where
\[\tilde{\Omega}^\eps:=\bigcup_t \left(\eps^{-\frac{1}{2}}(\Omega(t_0+\eps t)-x_0-c_0\eps t \nu)\times\{t\}\right).\]
Set the domain to be $D=\{(x,t)\in\partial_l\tilde{\Omega}^\eps+r\nu\,|\, r\in[0,\eps^{-\frac{1}{2}}L_\delta]\}.$ Denote $\|\partial_l\Omega_T\|_{\mathcal{C}^1}$ as the $\mathcal{C}^1$ norm of the lateral boundary of $\Omega_T$ and then 
\[\|\partial_l\tilde{\Omega}^\eps\|_{\mathcal{C}^1}=\eps^{\frac{1}{2}}\|\partial_l\Omega_T\|_{\mathcal{C}^1}.\]
Let $c>0$ be the small constant from Corollary \ref{cmp coro}. It is straightforward that 
\[(\eps^{-\frac{1}{2}}L_\delta)(\eps^\frac{1}{2} \|\partial_l\Omega_T\|_{\mathcal{C}^1}+\eps^{\frac{1}{2}}\|c\|_\infty)\leq c,\quad (\eps^{-\frac{1}{2}}L_\delta)^2\eps\leq c\delta\]
when $L_\delta$ is small only depending on $\delta$ and universal constants. Thus the corollary yields that there exists $r_\delta>0$ such that $|\tilde{w}^\eps|\leq 3\delta$ in $S_{\eps^{-\frac{1}{2}}r_\delta}$, which implies \eqref{irr bndry cnvrg}.


\end{proof}


\section{Homogenization on the Bottom Boundary}\label{bot}

In this section we briefly discuss the homogenization of $u^\eps$ on the bottom boundary. Since the bottom boundary is just flat, the proofs are actually simpler than those in previous sections. Below we start with a localized comparison lemma on $\{(x,t)\,|\, t\geq 0\}$.
\begin{lemma}\label{lemmab}
Let $C_0>c_0>0$ and $L>R>0$. Assume $w$ satisfies the following equation
\[
\left\{\begin{aligned}
&\frac{\partial}{\partial t}w-\mathcal{P}^+(D^2 w)\leq 0& \text{ in }&\quad  Q^0_L,\\
&w(x,0)\leq c_0&\text{ on }&\quad  \{t=0\},\\
&w(x,t)\leq C_0& \text{ in }&\quad  \overline{Q^0_L},
\end{aligned}
\right.
\]
where $Q^0_L:=\left\{(x,t)\,|\,|x|< L,\,0<t<L^2\right\}$. Then there exists a constant $C$ only depending on $C_0,d,\Lambda$ such that
\[w(x,t)\leq c_0+C{R^2}/L^{2} \quad \text{ for }(x,t)\in \overline{Q}^0_R.\]

\end{lemma}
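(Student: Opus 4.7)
The plan is to mirror the barrier construction of Lemma \ref{lemmal}, but adapted to the parabolic cylinder $Q_L$ with initial (rather than lateral) boundary data at distance $\leq c_0$. Since in this case the ``thin direction'' has been replaced by an initial time slice, the natural candidate barrier should be quadratic in $|x|$ and linear in $t$, giving the quadratic decay rate $R^2/L^2$ instead of the linear rate $R/L$ that appeared in Lemma \ref{lemmal}.

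Concretely, I would set
\[
\phi(x,t) = c_0 + \frac{C_0}{L^2}\bigl(|x|^2 + 2\Lambda d\, t\bigr).
\]
A direct computation gives $\partial_t \phi = 2\Lambda d C_0/L^2$ and $D^2\phi = (2C_0/L^2)I$, hence $\mathcal{P}^+(D^2\phi) = 2\Lambda d C_0/L^2$, so $\partial_t \phi - \mathcal{P}^+(D^2\phi) \geq 0$ in $Q_L$. On the parabolic boundary we check: at $t=0$, $\phi \geq c_0 \geq w$; on $|x|=L$ with $0\leq t\leq L^2$, $\phi \geq c_0 + C_0 \geq C_0 \geq w$. (If one insists on strict inequality, a trivial perturbation $\phi + \eta t$ with $\eta\to 0$ handles it.)

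Applying the standard parabolic comparison principle in $Q_L$ yields $w \leq \phi$ throughout $Q_L$. Restricting to $(x,t) \in \overline{Q_R}$ with $R<L$, we have $|x|^2 \leq R^2$ and $t \leq R^2$, so
\[
w(x,t) \leq \phi(x,t) \leq c_0 + \frac{C_0(1+2\Lambda d)}{L^2}R^2 = c_0 + C\frac{R^2}{L^2}
\]
with $C = C(d,\Lambda,C_0)$, as required.

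There is no real obstacle here: the argument is a routine barrier comparison, genuinely simpler than Lemma \ref{lemmal} because the domain is a full parabolic cylinder and we need only dominate the initial datum and the lateral cap (the bound $w \leq C_0$). The only minor subtlety is the choice of the coefficient in front of $t$, which must exceed $2\Lambda d$ so that the Pucci term is absorbed by $\partial_t \phi$; beyond that, everything follows from evaluating $\phi$ on $\overline{Q_R}$.
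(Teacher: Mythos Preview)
Your proof is correct and follows essentially the same barrier construction as the paper: a function of the form $\phi(x,t)=c_0+C_0L^{-2}|x|^2+C_1L^{-2}t$ with $C_1\geq 2\Lambda d\,C_0$, checked on the parabolic boundary and then restricted to $\overline{Q_R}$. The only cosmetic differences are that the paper takes $C_1=C_0\max\{2\Lambda d,1\}$ (so the supersolution inequality is strict) and also verifies $\phi\geq C_0$ at $t=L^2$, which is in fact unnecessary for parabolic comparison.
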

\begin{proof}
Let $C_1=C_0\max\left\{2\Lambda d,1\right\}$.
Consider the following barrier 
\[\phi(x,t)=C_0 L^{-2}|x|^2+c_0+C_1L^{-2}t.\]
It is direct to check that 
\[\frac{\partial}{\partial t}\phi- \mathcal{P}^+(D^2\phi)\geq 
C_1L^{-2}-C_0L^{-2}\Lambda 2d\geq 0, \]
\[
 \phi(x,t)\geq C_0 \quad\text{ when }|x|=L \text{ or }t=L^2\]
 and $\phi(x,0)\geq c_0$ for all $x$.
 By definition of viscosity solution, $w(x,t)\leq \phi(x,t)$. Then restricting $(x,t)$ to $\overline{Q}^0_R$ finishes the proof.
\end{proof}


As before, we will formally derive the cell problem, from which we define the homogenized boundary data as a boundary layer limit. After showing the continuity property, we prove bottom boundary homogenization. 

Fix a point $x_0\in\Omega(0)$ and set
\[v^\eps(x,t):=u^\eps(x_0+\eps x,\eps^2 t).\]
Write $\Omega^\eps:=\bigcup_t(\Omega^\eps(t)\times\{t\})$ with $\Omega^\eps(t)=\eps^{-1}(\Omega(\eps^2 t)-x_0)$, and then $v^\eps$ satisfies
\begin{equation*}
\left\{\begin{aligned}&
\frac{\partial}{\partial t}v^\eps(x,t)-\eps^2 F(\eps^{-2}D^2v^\eps,x_0+\eps{x},\eps^{-1}x_0+x,\eps^2 t,t)=0 & \text{ in }& \quad  \Omega^\eps,\\
&v^\eps(x,0)=g(x_0+{\eps}x,\eps^{-1}x_0+x,0,0)&  \text{ on }&\quad  \Omega^\eps(0).
\end{aligned}
\right.
\end{equation*}
Suppose $\eps^{-1}x_0$ converges to $z$ as $\eps\rightarrow 0$ in $\mathbb{R}^d/\mathbb{Z}^d$ along subsequences. Formally, we get the cell problem
\begin{equation*}
(B)\quad\quad\left\{\begin{aligned}&
\frac{\partial}{\partial t}v^z(x,t)- F^{0}(D^2v^z,x+z,t)=0 & \text{ in }&\quad   \mathbb{R}^d\times\mathbb{R}^+,\\
&v^z(x,0)=g^0(x+z,0)&  \text{ on }&\quad  \mathbb{R}^d.
\end{aligned}
\right.
\end{equation*}
By Lemma \ref{lemmab} and Perron's method, there exists a unique solution to (B).

\smallskip


In the following lemma, we study the boundary layer limit of (B).

\begin{lemma}\label{bot bdry lyer limit}
Let $v^z$ be the bounded solution to equation (B). Then $ v^{z}(x,R)$ converges uniformly for all $x,z$ as ${R\rightarrow \infty}$ and the boundary layer limit $\varphi_b$ is independent of $x,z$. Furthermore, if $F^0(M,y,s)=F^0(M)$ and is linear in $M$, then $\varphi_b$ satisfies
\[\varphi_b=\int_{[0,1)^{n}}g^0(y,0)dy.\]
\end{lemma}
\begin{proof}
By the periodicity of $F^0(M,y,s),g^0(y,0)$ in $y,s$ variables, we have
$v^{z}(x+\mathbb{Z},t)=v^{ z}(x,t).$
So we only need to consider $v^{ z}(x,t)$ when $|x|\leq \sqrt{d}$. Since $v^z$ is bounded, Corollary \ref{cor LW} implies that $|v^{ z}(x,R)-v^{ z}(0,R)|\leq CR^{-\beta} $ for some universal positive constants $C,\beta$. By Lemma \ref{lemmab}, for $t>0$,
\[\osc_{(\{(x,t)\in\mathbb{R}\times[R,\infty) \})}\left(v^{ z}(\cdot,\cdot)\right)\leq \osc_{(\{x\in\mathbb{R}\})}\left(v^{ z}(\cdot,R)\right).\] 
This shows the existence of the boundary layer limit $\varphi_b^z:=\lim_{R\to\infty}v^z(x,R)$ and
\begin{equation}
    \label{rate bt bdry}|v^z(x,R)-\varphi_b^z|\leq CR^{-\beta}.
\end{equation}

Next for a different $z'$, take $v^{ z'}$ to be the equation (B) with $z$ replaced by $z'$.
Take $\tilde{v}(x,t)=v^{ z'}(x+z-z',t)$ which then solves the same equation as $v^{ z}(x,t)$ does. By uniqueness, $v^{ z'}(x+z-z',t)=v^{ z}(x+z,t)$. Since the boundary layer limit is independent of $x$ and thus $\varphi_b^z=\varphi_b^{z'}=:\varphi_b(F^0,g^0)$.

For the second claim, the proof is similar to the one in Lemma 3.6 \cite{wf} and Lemma 7.1 \cite{feldman2018continuity} where elliptic equations and systems are concerned. Indeed we can consider a linear map $T$ which maps periodic functions $g^0(\cdot)$ to $\varphi_b$. It can be checked that $T$ is continuous, translation invariant and $T(1)=1$. By Riesz Representation theorem $T(g^0(\cdot))=\int_{[0,1)^{n}}g^0(y)dy$.
\end{proof} 


\begin{definition}\label{bdef}
For any {$x_0\in \Omega(0)$}, define \[\bar{g}(x_0,0)=\varphi_b(F^{x_0,0},g^{x_0,0})\]
where $\varphi_b$ is the boundary layer limit of $v^z$ given in the previous lemma.
\end{definition}

Now we prove the main theorem of this section.

\begin{theorem}(Bottom boundary homogenization)\label{homob}
\begin{enumerate}
    \item[(i.) ] 
$\bar{g}(\cdot,0)$ is continuous on $\Omega(0)$. 

\item[(ii.) ]For any $ \delta>0$ and {$x_0\in\Omega(0)$}, there exists $R_0>0$ such that the following holds. Take any sequence  $(x_\eps,t_\eps)\in \overline{\Omega}_T$ such that $(x_\eps,t_\eps)\rightarrow (x_0,0)$ as $\eps\to 0$, we have for all $R>R_0$
\begin{equation}\label{bothomo}
\limsup_{\eps\rightarrow 0}|u^\eps(x_\eps,t_\eps+\eps R)-\bar{g}(x_0,0)|\leq \delta.
\end{equation}

\end{enumerate}
\end{theorem}
\begin{proof}
The continuity property follows from the fact that $F^{x,t},g^{x,0}$ are uniformly continuous in $x$ and the convergence of the boundary layer limit is uniform in $x$ (see \eqref{rate bt bdry}). 

For the second part, let $v^\eps(x,t)=u(x_\eps+x,t_\eps+t)$.
Suppose along a subsequence of $\eps\to 0$, $\frac{x_\eps}{\eps}\to z$ in $\mathbb{R}^d/\mathbb{Z}^d$.
We take upper and lower half-relaxed limits of $v^\eps$ along the subsequence and we denote the limits as $v^{\ast z}$ and $v_{\ast}^z$ respectively. As before, $v^{\ast z}$ is a subsolution to equation (B) while $v_{\ast}^z$ is a supersolution.

By continuity properties, $v^{\ast z}=v_{\ast}^z=\bar{g}(x_0,0)$. Hence by comparison principle $v^{\ast z}\leq v_{\ast}^z$. Also since the reverse inequality holds by definition, we have $v^{\ast z}= v_{\ast}^z=v^z$. Note for all $z,x$, $v^z(x,R)\to \bar{g}(x_0,0)$ uniformly. Therefore we proved \eqref{bothomo}.

\end{proof}

\section{Uniqueness and Conclusions}\label{lastsection}

In the previous sections we identified the homogenized boundary data on $\Gamma_T$ which is most of the lateral boundary and the whole bottom boundary.
In this section, we want to prove the comparison principle for fully nonlinear parabolic equations even if the ordering on the lateral boundary only holds outside a small subset. Once this is done and suppose $\Gamma_T$ almost covers the whole boundary, we can show the homogenization of \eqref{main eqn}.

To measure the subsets of the boundary, let us introduce the following parabolic Hausdorff dimension of subsets in $\mathbb{R}^{d+1}$. 

\begin{definition}\label{prblc HD}
Suppose $\Sigma$ is a subset of $\mathbb{R}^d\times \mathbb{R}$. Define the $\mathfrak{d}$-dimensional parabolic Hausdorff content:
\[\mathcal{C}^\mathfrak{d}_{\mathcal{P}}(\Sigma)=\inf\left\{\sum_j r_j^\mathfrak{d}\,|\, \Sigma\subset \bigcup_j S_{r_j}(x_j,t_j)\right\}.\]
Here $S_r(x,t)$ is the parabolic cylinder given by \eqref{prblc}. We say that $\Sigma$ has parabolic Hausdorff dimension $d_\mathcal{P}$ if
\[d_\mathcal{P}(\Sigma)=\inf\left\{\mathfrak{d}\geq0 \,| \,\mathcal{C}^\mathfrak{d}_{\mathcal{P}}(\Sigma)=0  \right\}.\]
\end{definition}
We remark here that if denoting the standard Hausdorff dimension by $d_\mathcal{H}(\cdot)$, then we have for all $\Sigma\subset\mathbb{R}^{d+1}$\[{d_\mathcal{H}}(\Sigma)\leq d_\mathcal{P}(\Sigma)\leq 2d_\mathcal{H}(\Sigma).\] 

\begin{theorem}\label{uniq}
Suppose $G$ is a function on $\mathcal{M}\times\mathbb{R}^{d+1}$ satisfying (G) and $\Omega_T\subset \mathbb{R}^{d+1}$ is a space-time parabolic domain satisfying (O). Let $\Sigma \subset \partial_l\Omega_T$ be such that
\[\mathcal{C}^{d_0}_\mathcal{P}(\Sigma)=0 \quad \text{ where } d_0:={d\lambda}/{\Lambda}.\]
(In particular, this condition holds when the Hausdorff dimension of $\Sigma$ is less than $\frac{d\lambda}{2\Lambda}$.)
If two bounded functions $u,v$ are respectively upper and lower semi-continuous in $\Omega_T$ and they satisfy
\[
\left\{\begin{aligned}&
\frac{\partial}{\partial t}v(x,t)-G(D^2v,x,t)\leq \frac{\partial}{\partial t}u(x,t)-G(D^2u,x,t)&  \text{ in }&\quad \Omega_T,\\
    & \limsup_{(y,s)\rightarrow (x,t)} v(y,s)\leq \liminf_{(y,s)\rightarrow (x,t)}u(y,s)& \text{ on }&\quad \partial_p\Omega_T \backslash \Sigma,
\end{aligned}
\right.
\]
then $v\leq u$ in $\Omega$.
\end{theorem}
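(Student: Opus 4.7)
My plan follows Feldman's elliptic argument (\cite{wf}, Theorem 4.2) adapted to the parabolic setting. The strategy is to absorb the exceptional set $\Sigma$ by subtracting from $v$ a singular Pucci supersolution $\Psi_\eta$ that blows up on $\Sigma$ but decays to zero pointwise in $\Omega$ as $\eta \to 0$; the standard parabolic Pucci comparison (Lemma 6.2 of \cite{exist}, recalled in the preliminaries) then applies to $v - K\Psi_\eta$ and $u$ with boundary ordering on \emph{all} of $\partial_l\Omega \cup \overline{\Omega(0)}$.

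To set up, write $w = v - u$. Uniform ellipticity of $G$ together with the differential inequality for $u$ and $v$ gives $\partial_t w - \mathcal{P}^+(D^2 w) \le 0$ in $\Omega$, so $w \in \underline{S}(\lambda,\Lambda,0)$. The boundary hypothesis already ensures $w \le 0$ on $\overline{\Omega(0)}$ and on $\partial_l\Omega \setminus \Sigma$, so the conclusion would be immediate were it not for $\Sigma$. Next I would construct a parabolically radial singular supersolution $\Phi : \mathbb{R}^{d+1}\setminus\{0\} \to (0,\infty)$ of $\partial_t\Phi - \mathcal{P}^+(D^2\Phi) \ge 0$ satisfying $\Phi(x,t) \asymp \rho(x,t)^{-d_0}$, where $\rho(x,t) := \max\bigl(|x|, |t|^{1/2}\bigr)$ is the parabolic distance to the origin. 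The exponent $d_0 = d\lambda/\Lambda$ is determined by a self-similar profile $\Phi(x,t)=\rho^{-d_0}F(x/\rho)$ together with the Pucci spectral computation, mirroring the elliptic Liouville exponent used in \cite{wf}. Such barriers can be extracted from the parabolic regularity machinery of \cite{LW, LW2}, or built by hand from a backward-in-time self-similar ansatz symmetrized in $t$.

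With $\Phi$ in hand, fix $\eta>0$ and use the content hypothesis to obtain a cover $\Sigma \subset \bigcup_j S_{r_j}(p_j,t_j)$ with $\sum_j r_j^{d_0} < \eta$. Define
\[
\Psi_\eta(x,t) := \sum_j r_j^{d_0}\,\Phi\bigl(x-p_j,\,t-t_j\bigr),
\]
a (locally finite on $\Omega$) sum of Pucci supersolutions, hence itself a supersolution. Two features are crucial: on each cylinder $S_{r_j}(p_j,t_j)$ the $j$-th term alone is bounded below by a universal $c_0>0$ (by scaling of $\Phi$), so $\Psi_\eta \ge c_0$ in a neighbourhood of $\Sigma$ and $\Psi_\eta\to+\infty$ at every point of $\Sigma$; whereas at any $(x,t)\in\Omega$ with $\rho_\ast := \mathrm{dist}_{\mathcal{P}}\bigl((x,t),\Sigma\bigr) > 0$, each term is dominated by $C\,r_j^{d_0}\rho_\ast^{-d_0}$, giving $\Psi_\eta(x,t) \le C\eta\,\rho_\ast^{-d_0}$. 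Set $M := 2(\|u\|_\infty + \|v\|_\infty)$, $K := M/c_0$, and $\tilde v := v - K\Psi_\eta$. The one-sided Lipschitz estimate $G(A+B)-G(A) \le \mathcal{P}^+(B)$ yields
\[
\partial_t\tilde v - G(D^2\tilde v) \le \partial_t u - G(D^2 u) + K\bigl[\mathcal{P}^+(D^2\Psi_\eta)-\partial_t\Psi_\eta\bigr] \le \partial_t u - G(D^2 u),
\]
so $\tilde v - u \in \underline{S}(\lambda,\Lambda,0)$. On $\overline{\Omega(0)}$ and on $\partial_l\Omega\setminus\Sigma$ one has $\tilde v \le v \le u$; near each $(x_0,t_0)\in\Sigma$, $\limsup \tilde v \le \|v\|_\infty - Kc_0 \le -\|u\|_\infty \le u$. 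Lemma 6.2 of \cite{exist}, now with no exceptional boundary set, gives $\tilde v \le u$ in $\Omega$, i.e. $v \le u + K\Psi_\eta$. Since $\Psi_\eta(x,t)\to 0$ as $\eta\to 0$ for every fixed $(x,t)\in\Omega$, one obtains $v\le u$ on $\Omega$.

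The main obstacle is the parabolic singular barrier $\Phi$ with the precise exponent $d_0 = d\lambda/\Lambda$: in the elliptic case this is a classical radial Pucci computation, but in the parabolic setting a self-similar profile must balance the time derivative against $\mathcal{P}^+$ of the spatial Hessian, which is delicate at the borderline decay rate. A secondary technical point is verifying that the (possibly countable) sum $\Psi_\eta$ remains a viscosity supersolution; this follows by truncating to finite partial sums and invoking stability of supersolutions under monotone pointwise limits. Everything else in the argument — the reduction to Pucci, the covering, and the final comparison-and-limit — is an essentially routine adaptation of \cite{wf} Theorem 4.2 to the parabolic scaling.
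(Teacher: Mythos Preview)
Your overall strategy is exactly the paper's: reduce to $w=v-u\in\underline S(\lambda,\Lambda,0)$, cover $\Sigma$ by parabolic cylinders with small $d_0$-content, build a Pucci supersolution as a sum of translated singular profiles that dominates $w$ on the boundary, apply comparison, and send the covering parameter to zero. The reduction, the covering, and the limiting step are all fine.

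The gap is in the singular barrier. A function with $\Phi(x,t)\asymp \rho(x,t)^{-d_0}$, $\rho=\max(|x|,|t|^{1/2})$, \emph{cannot} be a supersolution of $\partial_t\Phi-\mathcal P^+(D^2\Phi)\ge 0$ with the exponent $d_0=d\lambda/\Lambda$. On the slice $t=0$, $x\ne 0$, any $t$-symmetric profile has $\partial_t\Phi=0$, so one would need $-\mathcal P^+(D^2|x|^{-d_0})\ge 0$; but the eigenvalues of $D^2|x|^{-\alpha}$ are $\alpha(\alpha+1)|x|^{-\alpha-2}$ (once) and $-\alpha|x|^{-\alpha-2}$ ($d-1$ times), giving $\mathcal P^+(D^2|x|^{-\alpha})=\alpha|x|^{-\alpha-2}\bigl[\Lambda(\alpha+1)-\lambda(d-1)\bigr]$, which is nonpositive only for $\alpha\le \lambda(d-1)/\Lambda-1$. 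With $\alpha=d\lambda/\Lambda$ this requires $\Lambda\le -\lambda$, impossible. More generally, ``symmetrizing in $t$'' a forward parabolic supersolution does not produce a supersolution, because time reversal flips the sign of $\partial_t$. So neither of your proposed constructions yields the barrier you need.

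The paper resolves this with a purely one-sided, explicit profile (Lemma~\ref{Phi}):
\[
\Phi(x,t)=t^{-d_0/2}e^{-|x|^2/(4\Lambda t)}\quad (t>0),\qquad \Phi\equiv 0\ (t\le 0),
\]
which satisfies $\partial_t\Phi-\mathcal P^+(D^2\Phi)\ge 0$ on $\mathbb R^{d+1}\setminus\{0\}$ and has the parabolic homogeneity $\Phi(kx,k^2t)=k^{-d_0}\Phi(x,t)$. Because this barrier vanishes for $t\le 0$, centering it at $(p_j,t_j)$ as you do would give zero on the lower half of $S_{r_j}(p_j,t_j)$ and the uniform lower bound $c_0$ fails there. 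The paper's fix is a time shift: set $\Phi_j(x,t)=\Phi(x-p_j,\,t-t_j+2r_j^2)$, placing the singularity at $(p_j,t_j-2r_j^2)$, strictly \emph{before} the cylinder; then on $S_{r_j}(p_j,t_j)$ one has $r_j^2\le t-t_j+2r_j^2\le 3r_j^2$ and $|x-p_j|\le r_j$, so $r_j^{d_0}\Phi_j$ is bounded below by a universal constant. With this barrier and shift in place, your remaining steps go through verbatim.
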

In order to prove the theorem, we construct a singular super solution to the fully nonlinear parabolic equation with Pucci's operator.
\begin{lemma}\label{Phi}
Let $\mathcal{P}^+$ be the Pucci's extremal operator with parameters $\Lambda\geq \lambda>0$. Set \[ \Phi(x,t)=\left\{
\begin{aligned}&
t^{-\frac{d\lambda}{2\Lambda}}e^{-\frac{|x|^2}{4\Lambda t}} &   \text{ if  }\quad& t>0,\\
&0 &  \text{ if  }\quad& t\leq0.
\end{aligned}\right.\]
Then we have $\Phi(kx,k^2t)=k^{-\frac{d\lambda}{\Lambda}}\Phi(x,t)$ and $\Phi$ is a singular super solution to
\[\frac{\partial \Phi}{\partial{t}}-\mathcal{P}^+(D^2\Phi)\geq 0 \quad \text{ in }\quad\mathbb{R}^{d+1}\backslash (0,0).\]
\end{lemma}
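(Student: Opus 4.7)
The plan is to verify both claims by direct computation; the first is purely algebraic, and the second reduces to a two-case analysis whose sign is controlled by the structural inequality $\lambda\le\Lambda$.

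The scaling identity is immediate: substituting $(kx,k^2t)$ gives $(k^2t)^{-d\lambda/(2\Lambda)}\exp(-|kx|^2/(4\Lambda k^2 t))=k^{-d\lambda/\Lambda}\Phi(x,t)$. Before addressing the differential inequality I would observe that $\Phi$ is $C^\infty$ on $\mathbb{R}^{d+1}\setminus\{(0,0)\}$, so it suffices to check the inequality classically: on $\{t<0\}$ the function is identically zero, on $\{t>0\}$ it is smooth, and at $(x,0)$ with $x\ne 0$ the Gaussian factor beats every polynomial in $1/t$, so all derivatives vanish in the limit and the two branches glue smoothly.

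The main computation is on $\{t>0\}$. Writing $\alpha:=d\lambda/(2\Lambda)$, direct differentiation yields
\[
\partial_t\Phi=\Phi\Bigl(-\frac{\alpha}{t}+\frac{|x|^2}{4\Lambda t^2}\Bigr),\qquad D^2\Phi=\Phi\Bigl(\frac{x\otimes x}{4\Lambda^2 t^2}-\frac{I}{2\Lambda t}\Bigr).
\]
The normalized Hessian $D^2\Phi/\Phi$ has a single radial eigenvalue $\mu_1=|x|^2/(4\Lambda^2 t^2)-1/(2\Lambda t)$, whose sign depends on whether $|x|^2\ge 2\Lambda t$ or not, together with $(d-1)$ tangential eigenvalues all equal to $\mu_2=-1/(2\Lambda t)$, which are always negative. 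Hence $\mathcal{P}^+(D^2\Phi)/\Phi$ always contains the contribution $-\lambda(d-1)/(2\Lambda t)$ from the tangential directions, while the radial term is $\Lambda\mu_1$ or $\lambda\mu_1$ depending on the sign of $\mu_1$.

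I would then split into the two cases. In the regime $|x|^2\ge 2\Lambda t$, collecting terms gives $(\partial_t\Phi-\mathcal{P}^+(D^2\Phi))/\Phi=(1-\lambda/\Lambda)/(2t)$; in the regime $|x|^2<2\Lambda t$, the $-\alpha/t$ term from $\partial_t\Phi$ exactly cancels the $-d\lambda/(2\Lambda t)$ produced by assembling all eigenvalues at weight $\lambda$, leaving $(\partial_t\Phi-\mathcal{P}^+(D^2\Phi))/\Phi=(1-\lambda/\Lambda)|x|^2/(4\Lambda t^2)$. Both are nonnegative precisely because $\lambda\le\Lambda$. This is the only nontrivial step, and it is where the specific exponent $\alpha=d\lambda/(2\Lambda)$ enters: it is tuned exactly to produce the cancellation in the inner regime, after which positivity comes for free from ellipticity. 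I do not foresee any real obstacle beyond careful bookkeeping.
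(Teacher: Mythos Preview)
Your proof is correct and is exactly the direct computation the paper has in mind; the paper itself offers no details beyond stating that the lemma ``follows from a direct computation.'' Your two-case split according to the sign of the radial eigenvalue and the resulting expressions $(1-\lambda/\Lambda)/(2t)$ and $(1-\lambda/\Lambda)|x|^2/(4\Lambda t^2)$ are precisely what one obtains, and your remark about smooth gluing at $(x,0)$ with $x\ne 0$ is the only point worth making explicit.
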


The proof follows from a direct computation. As a remark, Armstrong, Sirakov and Smart \cite{singular} constructed singular solutions to general fully nonlinear elliptic equations and the parallel result for the parabolic equations remains to be studied. For us, one singular solution is enough for the purpose.

\begin{proof}(of Theorem \ref{uniq})
Set $w=v-u$ and it satisfies
\[
\left\{\begin{aligned}&
\frac{\partial}{\partial t}w-\mathcal{P}^+(D^2 w)\leq 0 &\text{ in }&\quad \Omega_T,\\
& \limsup_{(y,s)\rightarrow (x,t)} w(y,s)\leq 0 & \text{ on }&\quad \partial_p\Omega_T \backslash \Sigma.
\end{aligned}
\right.
\]
Let $m_1=\min\{\Phi(x,1)\,|\, |x|\leq 1 \}$, $m_2=\max\{\Phi(x,t)\,|\,t=1 \text{ or }|x|=1\}$ and it is obvious that $m_2>m_1>0$. Since $u,v$ are bounded, we assume $|w|\leq C_0$. For any fixed $\delta>0$, by the $0$ dimension assumption, we can assume that $\Sigma$ is covered a set of countably many parabolic cylinders $\{S_{r_j}(x_j,t_j)\}_j$ and 
\[\sum_{j\geq 1} r_j^{d_0}<\delta^2 \frac{m_1}{C_0 m_2 2^{d_0}}=:\frac{\delta^2}{M m_2}.\]
Let $\Phi$ be as in Lemma \ref{Phi} and consider the following barrier
\[\Phi_\delta(x,t)=M \sum_j r_j^{d_0}\Phi(x-x_j,t-t_j+2r_j^2)=:M \sum_j r_j^{d_0}\Phi_j(x,t).\]
By Lemma \ref{Phi}, this $\Phi_\delta$ is a super solution to
\[{\partial_t}\Phi_\delta-\mathcal{P}^+(D^2 \Phi_\delta)\geq M\sum_j  r_j^{d_0}(\frac{\partial}{\partial t}\Phi_j-\mathcal{P}^+(D^2 \Phi_j) )\geq 0 \quad\text{ in }\Omega_T.\]

Note for any $(x,t)\in S_{r_j}(x_j,t_j)$ for some $j$, we have
\[3r_j^2\geq t-t_j+2r_j^2\geq r_j^2,\quad \left|(x-x_j)(t-t_j+2r_j^2)^{-\frac{1}{2}}\right|\leq 1.\] 
Therefore
\[\Phi_j(x,t)\geq (t-t_j+2r_j^2)^{-\frac{d_0}{2}}\Phi\left((x-x_j)(t-t_j+2r_j^2)^{-\frac{1}{2}},1\right) \geq (2r_j)^{-d_0}m_1.\]
So we have
\[\Phi_\delta(x,t)\geq 2^{-d_0}Mm_1= C_0\geq w(x,t).\]
For points on the boundary outside $\Sigma$, we have $\Phi_\delta(x,t)\geq 0\geq w(x,t)$. Then by comparison principle, $w\leq \Phi_\delta$ in $\Omega_T$.

Now for any $(x,t)\in\Omega_T$ that $(x,t)\notin \bigcup_j S_{\delta^{{1}/{d_0}}}(x_j,t_j)$, we have either
\[|x-x_j|\geq \delta^{\frac{1}{d_0}}\text{ or } |t-t_j|\geq \delta^{\frac{2}{d_0}},\]
and then
$\Phi_j(x,t)\leq \delta^{-1}m_2$. This gives 
\[\Phi_\delta(x,t)\leq M\delta^{-1}m_2\sum_j r_j^{d_0}\leq \delta.\]
Since $w\leq \Phi_\delta$ in $\Omega_T$, letting $\delta\rightarrow 0$ shows $w\leq 0$ in $\Omega_T$, which finishes the proof.
\end{proof}


In all, let us put together the results obtained and conclude with the following main theorem of the paper.

\begin{theorem}\label{final}
Assume conditions (O)(F1)--(F4) hold. For any $T>0$, recall \eqref{bdry clss} and let $\Gamma_T=\Gamma_1(T)\cup\Gamma_2(T)\subseteq \partial_l\Omega_T$. Denote $d_0=\frac{\lambda d}{\Lambda}$ and suppose
\[\mathcal{C}^{d_0}_\mathcal{P}\left(\partial_l\Omega_T\backslash \Gamma_T\right)=0.\]
 Then for $t\in (0,T)$, the solutions $u^\eps(x,t)$ to \eqref{eqns1} converge locally uniformly to the unique solution $\bar{u}(x,t)$ of 
\begin{equation}\label{7.3}
\left\{\begin{aligned}&
\frac{\partial}{\partial t}\bar{u}(x,t)-\bar{F}(D^2\bar{u},x,t)=0,& \text{ in }&\quad \Omega_T\\
&\bar{u}(x,t)=\bar{g}(x,t)& \text{ on }&\quad  \Gamma_T,\\
&\bar{u}(x,0)=\bar{g}(x,0)&\text{ on }&\quad   \overline{\Omega(0)}
\end{aligned}
\right.
\end{equation}
where $\bar{F}$ is the homogenized operator associated with $F$ given by Theorem \ref{homo} and $\bar{g}$ is a continuous on {$\Gamma_T\cup(\Omega(0)\times\{0\})$} given by \eqref{def bar g} and \eqref{bdef}. 
\end{theorem}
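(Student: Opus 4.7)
The plan is to pass to the half-relaxed limits $u^* := \limsup^* u^\epsilon$ and $u_* := \liminf_* u^\epsilon$, show both agree with the unique viscosity solution $\bar{u}$ of \eqref{7.3}, and deduce local uniform convergence from this equality together with the continuity of $\bar u$. Continuity of $\bar{g}$ on $\Gamma_T$ is exactly Proposition \ref{tcont}, and continuity on $\overline{\Omega(0)}$ is part of Theorem \ref{homob}. Well-posedness of \eqref{7.3} then follows from Perron's method combined with the comparison principle Theorem \ref{uniq}: by hypothesis the missing boundary set $\Sigma := \partial_l \Omega_T \setminus \Gamma_T$ has zero $d_0$-dimensional parabolic Hausdorff content, and $\bar{F}$ satisfies (F1)--(F2) by Proposition 3.2 of \cite{inthomo}, so the singular-barrier argument of Theorem \ref{uniq} applies directly to $\bar F$.

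Next, Theorem \ref{homo} furnishes the interior stability: $u^*$ is a subsolution and $u_*$ is a supersolution of $\partial_t w - \bar{F}(D^2 w, x, t) = 0$ in $\Omega_T$. The heart of the proof is then to establish the boundary inequalities $u^*(x_0, t_0) \leq \bar{g}(x_0, t_0) \leq u_*(x_0, t_0)$ on $\Gamma_T \cup \overline{\Omega(0)}$. On $\overline{\Omega(0)}$ this reads directly off \eqref{bothomo} in Theorem \ref{homob} applied to an arbitrary approaching sequence. On $\Gamma_T$ one must convert Theorem \ref{homol} into a statement about arbitrary approach. The arguments behind Lemmas \ref{contu}, \ref{contpsi} and the uniform claim \eqref{claimunif} in Theorem \ref{homol} already provide, for any $\delta > 0$, constants $\epsilon_0, R_0, c_\delta > 0$ such that
\[
\sup_{\epsilon \leq \epsilon_0,\ (x', t) \in \partial_l \Omega \cap B_{c_\delta}(x_0, t_0)} \bigl| u^\epsilon(x' + \epsilon^{1/2} R_0 \nu, t) - \bar{g}(x_0, t_0) \bigr| \leq \delta
\]
(with $\epsilon R_0$ in place of $\epsilon^{1/2} R_0$ in Case 1). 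Feeding this uniform bound into Lemma \ref{lemmalpos} applied to $\pm(u^\epsilon - \bar g(x_0, t_0)) - C\epsilon t$ on a parabolic cylinder that sits just inside the thin tube $\{x' + \epsilon^{1/2} R_0 \nu\}$, and using (F2)(F4) to absorb the small gradient drift as in the last step of the proof of Theorem \ref{homol}, gives after sending $\epsilon \to 0$ and then $\delta \to 0$ the desired boundary inequalities for $u^*$ and $u_*$.

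Combining the interior sub/supersolution properties with these boundary inequalities, the comparison principle Theorem \ref{uniq} with exceptional set $\Sigma$ yields $u^* \leq u_*$ in $\Omega_T$; the reverse inequality is trivial, so $u^* = u_* = \bar u$. Standard half-relaxed-limit arguments (upper/lower semicontinuity of $u^*, u_*$ together with continuity of $\bar u$) then upgrade this equality to locally uniform convergence on compact subsets of $\Omega_T \cup \Gamma_T \cup \overline{\Omega(0)}$.

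The main obstacle I expect is the second paragraph above: passing from the specific-sequence convergence provided by Theorem \ref{homol} to a full half-relaxed-limit statement. That theorem only controls values along outward-parabolic shifts $x_\epsilon + \epsilon R \nu$ (or $\epsilon^{1/2} R \nu$), whereas the half-relaxed limit at $(x_0, t_0)$ sweeps over all sequences approaching $(x_0, t_0)$ in $\Omega_T$, including those that remain $O(\epsilon)$ from the lateral boundary where pointwise homogenization can fail. Resolving this requires exploiting the uniformity of Theorem \ref{homol} over the base sequence $(x_\epsilon, t_\epsilon)$ and propagating the boundary value through a Pucci-type barrier of parabolic size $L$ whose base sits on the tube $\{x' + \epsilon^{1/2} R_0 \nu\}$, incurring only an $R/L$ error absorbed by taking $R \ll L \to \infty$.
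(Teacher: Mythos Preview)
Your proposal is correct and follows essentially the same route as the paper: take half-relaxed limits, use Theorem~\ref{homo} for the interior sub/supersolution property, Theorems~\ref{homol} and~\ref{homob} for the boundary identification $u^*=u_*=\bar g$ on $\Gamma_T\cup\Omega(0)$, Proposition~\ref{tcont} and Theorem~\ref{homob} for continuity of $\bar g$, and then close with Theorem~\ref{uniq}. The paper's own proof is exactly this outline, written in a single paragraph.

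The obstacle you flag in your last paragraph is real but you have also correctly identified its resolution, and that resolution is already built into the paper. The paper's proof of Theorem~\ref{final} simply cites Theorem~\ref{homol} for $u^*=u_*=\bar g$ on $\Gamma_T$ without further comment, whereas the literal statement of Theorem~\ref{homol} only controls $u^\epsilon$ at points shifted inward by $\epsilon R\nu$ or $\epsilon^{1/2}R\nu$. The bridge is exactly the uniform claim \eqref{claimunif} together with the Lemma~\ref{lemmalpos} barrier step in the final paragraph of the proof of Theorem~\ref{homol}: once one knows $|u^\epsilon-\bar g(x_0,t_0)|\le\delta$ on a shell at distance $\epsilon^{1/2}R_0$ uniformly over a neighbourhood of $(x_0,t_0)$, the barrier propagates this inward with error $O(R/L)$, which handles all interior approaching sequences at distance $\ge \epsilon^{1/2}R_0$. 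For approaching sequences that stay within distance $\epsilon^{1/2}R_0$ of the boundary, one simply applies Theorem~\ref{homol} with $(x_\epsilon,t_\epsilon)$ taken to be the nearest boundary point and uses that $R_0$ can be taken arbitrarily large after $\delta$ is fixed. So your worry is legitimate, your proposed fix is the right one, and it coincides with what the paper does inside the proof of Theorem~\ref{homol} rather than in the (terse) proof of Theorem~\ref{final} itself.
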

\begin{proof}


Let $u^*$ and $u_*$ be respectively the upper and lower half-relaxed limits of $u^\eps$. Then by Theorem \ref{homo}, $u^*$, $u_*$ are respectively sub and super solutions to equation \eqref{7.3}. By Theorem \ref{homol} and Theorem \ref{homob}, we know that for $(x,t)\in \Gamma_T\cup{(\Omega(0)\times\{0\})}$
\[u^*(x,t)=u_*(x,t)=\bar{g}(x,t).\]
The continuity of $\bar{g}$ is proved in Theorem \ref{tcont} and Theorem \ref{homob}.
By the assumption, the $d_0$-dimensional parabolic Hausdorff content of the set $\partial_l\Omega_T\backslash\Gamma_T$ equals $0$. We apply Theorem \ref{uniq} and find out
\[u_*(x,t)\geq u^*(x,t) \quad\text{ in }\Omega_T.\]
Since the other direction of the above inequality holds trivially by definitions, we have $u_*= u^*=\bar{u}$. This shows that $u^\eps$ converges locally uniformly in $\Omega_T$ and the limit equals $\bar{u}$, the unique solution to \eqref{7.3}. Uniqueness of solutions of \eqref{7.3} again follows from Theorem \ref{uniq}. 
\end{proof}

Moreover if the associated homogenized operator $\bar{F}$ is linear, we have the following theorem.

\begin{theorem}\label{linearversion}
Assume conditions (F1)--(F4)(O) hold, the homogenized operator $\bar{F}(M,x,t)$ is linear in $M$ for all $x,t$ and we have $\partial_l\Omega\subset\overline{\Gamma}_T$. Then the homogenized boundary data $\bar{g}$ is continuous on ${\partial_p\Omega_T}$. The solutions $u^\eps(x,t) $ to \eqref{eqns1} converge locally uniformly to the unique solution $\bar{u}(x,t)$ to \eqref{homosolution}.
Furthermore suppose that in a neighbourhood of $(x,t)\in\partial_p\Omega_T$ the original operator $F(M,x,y,t,s)$ is independent of $y,s$ and is linear in $M$, then we have
\begin{align*}
&\bar{g}(x,t)=\int_{[0,1)^{n+1}}g(x,y,t,s)dyds &\quad\text{ if }t>0,\\
&\bar{g}(x,0)=\int_{[0,1)^{n}}g(x,y,0,0)dy&\quad\text{ otherwise}.
\end{align*}

\end{theorem}
\begin{proof}

Continuity extension of the homogenized boundary data $\bar{g}$ on ${\partial_p\Omega_T}$ follows from Remark \ref{rmrk linear}. Having a continuous boundary data $\bar{g}$ defined on the whole parabolic boundary, the proof of the convergence follows from the proof of Theorem \ref{final}.

For the second statement, first suppose $(x,t)\in \Gamma_1$. From the assumption, we can write 
\[F(M,x,y,t,s)=F(M,x,t).\] 
The operator in the cell problem (I) is simply $F^{x,t}(M)=F(M,x,t)$. By Lemma \ref{concave}, the boundary layer limit of (I) equals the linear average of $g^{x,t}(y,s)=g(x,y,t,s)$ about $y,s$ variables. If $x\in \Omega(0)$, we apply Lemma \ref{bot bdry lyer limit} instead to get the same result. In the case when $(x,t)\in\Gamma_2$, due to Lemma \ref{bd lyer lmt R1},  $\chi_{\nu}(\cdot )$ is periodic with periodicity $|\hat{\nu}|^{-1}$ where $\hat{\nu}$ is such that $\hat{\nu}\in\mathbb{Z}^d$ is irreducible and $\nu=\frac{\hat{\nu}}{|\hat{\nu}|}$. Let us write one smallest periodic block in $\partial P_\nu$ as $\mathbb{T}_\nu$. Then using linearity of the operators and applying Lemma \ref{concave} twice, we get
\begin{align*}
\bar{g}(x,t)&=|c\hat{\nu}|\int_{0}^{|c\hat{\nu}|^{-1}} \chi_\nu(c \tau)d\tau\\
&=\frac{1}{\text{vol}\left(\mathbb{T}_\nu\times [0,{|\hat{\nu}|^{-1}})\right)}\int_{[0,|\hat{\nu}|^{-1})}\int_{\mathbb{T}_\nu\times [0,1)} g^{x,t}(z+\tau\nu,s )dzdsd\tau\\
&=\int_{[0,1)^n\times [0,1)} g^{x,t}(y,s)dyds\\
&=\int_{[0,1)^{n+1}} g(x,y,t,s)dyds . 
\end{align*}
We finished the proof.
\end{proof}




\end{document}